\begin{document}

\def\abstractname{\bf Abstract}
\def\dfrac{\displaystyle\frac}
\def\dint{\displaystyle\int}
\def\vec#1{\overset\rightarrow{#1}}
\let\oldsection\section
\renewcommand\section{\setcounter{equation}{0}\oldsection}
\renewcommand\thesection{\arabic{section}}
\renewcommand\theequation{\thesection.\arabic{equation}}
\newtheorem{theorem}{\indent Theorem}[section]
\newtheorem{lemma}{\indent Lemma}[section]
\newtheorem{proposition}{\indent Proposition}[section]
\newtheorem{definition}{\indent Definition}[section]
\newtheorem{remark}{\indent Remark}[section]
\newtheorem{corollary}{\indent Corollary}[section]
\renewcommand{\proofname}{\indent\it\bfseries Proof.}

\title{\LARGE\bf
Global Strong and Weak Solutions to Nematic Liquid Crystal Flow in Two Dimensions
\\
\author{Jinkai Li$^1$
\thanks{Corresponding author. Email: {\it jkli@math.cuhk.edu.hk}}
\\
{\it \small $^1$  The Institute of Mathematical Sciences, The Chinese University of Hong Kong, Hong Kong}
 } }
\date{}

\maketitle

\begin{abstract}
\end{abstract}
We consider the strong and weak solutions to the Cauchy problem of the inhomogeneous incompressible nematic liquid crystal equations in two dimensions. We first establish the local existence and uniqueness of strong solutions by using the standard domain expanding method, and then extend such local strong solution to be a global one, provided the initial density is away from vacuum and the initial direction field satisfies some geometric structure. The size of the initial data can be large. Based on such global existence results of strong solutions, by using compactness argument, we obtain the global existence of weak solutions with nonnegative initial density.

{\bf Keywords}: global strong solutions; global weak solutions; nematic liquid crystal.

\section{Introduction}\label{sec1}

The evolution of liquid crystals in $\mathbb R^d$ is described by the following system
\begin{eqnarray}
&&\partial_t\rho+\textmd{div}(\rho u)=0,\label{1.1}\\
&&\rho(u_t+(u\cdot\nabla)u)-\Delta u+\nabla p=-\textmd{div}(\nabla d\odot\nabla d),\label{1.2}\\
&&\textmd{div}u=0,\label{1.3}\\
&&\partial_t d+(u\cdot\nabla) d=\Delta d+|\nabla d|^2d,\label{1.4}\\
&&|d|=1,\label{1.5}
\end{eqnarray}
where $\rho$ is the density, $u$ represents the velocity field of the flow, $d$ is the unit vector field that represents the macroscopic molecular orientation of the liquid crystal material and $p$ denotes the pressure function. The notation $\nabla d\odot\nabla d$ is a $d\times d$ matrix whose $(i, j)$-the entry is $\partial_id\cdot\partial_jd$, $1\leq i, j\leq d$.

System (\ref{1.1})--(\ref{1.5}) is a simplified version of the
Ericksen-Leslie model, which reduces to the Ossen-Frank model in the
static case, for the hydrodynamics of nematic liquid crystals
developed by Ericksen \cite{E1}, \cite{E2} and Leslie \cite{LE} in
the 1960's. Both the full Ericksen-Leslie model and the simplified
version are the macroscopic continuum description of the time
evolution of the materials under the influence of both the flow
velocity field $u$ and the microscopic orientation configurations
$d$ of rod-like liquid crystals. A brief account of the
Ericksen-Leslie theory and the derivations of several approximate
systems can be found in the appendix of \cite{LL1}. For more details
of physics, we refer the readers to the two books of Gennes-Prost
\cite{GP} and Chandrasekhar \cite{CH}. Though the above system is a
simplified version of the full Ericksen-Leslie system, it still
remains the most important mathematical structures as well as most
of the essential difficulties of the original Ericksen-Leslie
system.

In the homogeneous case, i.e. $\rho\equiv C$, Lin-Lin \cite{LL1,LL2}
initiated the mathematical analysis of (\ref{1.2})--(\ref{1.4}) in
the 1990's. More precisely, they proved in \cite{LL1} the global existence of weak solutions to the initial and boundary value problem to the system (\ref{1.2})--(\ref{1.4}) with $|\nabla d|^2d$ replaced by Ginzburg-Landau type approximation term
$\frac{1-|d|^2}{\varepsilon^2}d$ in bounded domain of two or three dimensions. They also obtain the unique existence of
global classical solutions in dimension two or in dimension three but with large enough viscous coefficients. In \cite{LL2}, they proved the
partial regularity theorem for suitable weak solutions, similar to
the classical theorem by Caffarelli-Kohn-Nirenberg \cite{CKN} for
the Navier-Stokes equation. When take the term $|\nabla d|^2d$ into consideration, the system becomes more complicated from the mathematical point
of view, since it is a supercritical term in the equations of $d$. So far, the global existence of weak solutions are proven only for the two dimensional
case, see Lin, Lin and Wang \cite{Lin4}, Hong \cite{Hong1} and Hong and Xin \cite{Hong2}. The approach used in \cite{Lin4} and that used in
\cite{Hong1,Hong2} are different, where the global existence is proven directly to the liquid crystal system with term $|\nabla d|^2d$, while in
\cite{Hong1,Hong2}, the strategy is to show the convergence of the solutions to the approximate system with penalty term $\frac{1-|d|^2}{\varepsilon^2}d$
as $\varepsilon\rightarrow0$. The uniqueness of such weak solution was later proven
in \cite{LW}.

In non-homogeneous case, i.e. the density dependent case, the global existence of weak solutions to the liquid crystals equations with penalty term $f(d)$ instead of $|\nabla d|^2d$ is established by Jiang and Tan in \cite{Jiang1} and Liu and Zhang in \cite{Liu1}, see Liu and Hao \cite{Liu2} and Wang and Yu \cite{Wang} for the compressible case. The existence of weak solution to density dependent liquid crystals equations with term $|\nabla d|^2d$ for arbitrary initial data is not known in the present, see Jiang, Jiang and Wang \cite{Jiang2} for a result in this direction, where the global existence of weak solution to compressible liquid crystal equations in two dimensions is obtained under some geometric assumption on the initial direction, see also Wu and Tan \cite{WUTAN} for the global existence of low energy weak solutions in three dimensions. When the initial data gains more regularity, one can expect to obtain more regular solutions than the weak ones.
Wen and Ding \cite{WD} obtain the local existence and uniqueness of strong solutions to the Dirichlet problem of the system (\ref{1.1})--(\ref{1.5}) in bounded domain with initial
density being allowed to have vacuum. They also established the
global existence and uniqueness of solutions for two dimensional case
if the initial density is away from vacuum and the initial data is
of small norm. Global existence of strong solutions with small initial data to three dimensional liquid crystal equations are obtained by Li and Wang in \cite{LIXL1} for constant density case, Li and Wang in \cite{LIX2} for nonconstant but positive density case, and Ding, Huang and Xia in \cite{DHXIA} and Li \cite{Li} for nonnegative density case.

In the present paper, we consider the global existence of strong and weak solutions to the Cauchy problem of the system (\ref{1.1})--(\ref{1.5}). More precisely, if the initial data is regular and away from vacuum, we obtain the global strong solutions, and if vacuum appears initially, then we obtain the global weak solutions. As the first step of our procedure, recalling the local existence and uniqueness of strong solutions has been proven in \cite{WD} in bounded domain, we use the standard domain expanding method to obtain the local existence of strong solution to the Cauchy problem under the assumption that the initial density is away from zero. After obtaining the local strong solutions, the next step is to extend such local strong solution to be a global one. For this purpose, the main issue is to do the estimates on the local strong solutions which guarantees to extend the local solution to any finite time, obtaining the global strong solution. The energy inequalities established in Lemma \ref{lem1.2} shows that all the higher order estimates are based on space time $L^4$ bound of $\nabla d$. Using the rigidity theorem established recently by Lei, Li and Zhang in \cite{LEI}, we can successfully obtain the a priori bound on $L^4$ norm of $\nabla d$ in space and time, and thus finish the proof of global strong solutions. And finally, using the compactness results of Lions in \cite{Lions}, we can establish the global existence of weak solutions to the Cauchy problem.

Before stating our main results, we give the definitions of strong and weak solutions.

\begin{definition} Let $0<T<\infty$. $(\rho, u, d, p)$ is called a strong solution to the system (\ref{1.1})--(\ref{1.5}) in $Q_T=\mathbb R^2\times(0, T)$ with initial data $(\rho_0, u_0, d_0)$, if
\begin{eqnarray*}
&&\rho\in L^\infty(\mathbb R^2\times(0, T)),\quad\nabla\rho,\rho_t\in L^\infty(0, T; L^2(\mathbb R^2)),\\
&&\nabla p\in L^\infty(0, T; L^2(\mathbb R^2)\cap L^2(0, T; L^q(\mathbb R^2))\\
&&u\in L^\infty(0, T; H^2(\mathbb R^2))\cap L^2(0, T; W^{2,q}(\mathbb R^2)),\quad u_t\in L^\infty(0, T; L^2(\mathbb R^2))\cap L^2(0, T; H^1(\mathbb R^2)),\\
&&\nabla d\in L^\infty(0, T; H^2(\mathbb R^2)),\quad d_t\in L^\infty(0, T; H^1(\mathbb R^2))\cap L^2(0, T; H^2(\mathbb R^2))
\end{eqnarray*}
for $q\in[2, \infty)$, satisfies equations (\ref{1.1})--(\ref{1.5}) a.e. in $\mathbb R^2\times(0, T)$, and the initial data $(\rho_0, u_0, d_0)$ a.e. in $\mathbb R^2$.
\end{definition}

\begin{definition}
Let $0<T<\infty$. A triple $(\rho, u, d)$ is called a weak solution to the system (\ref{1.1})--(\ref{1.5}) $Q_T=\mathbb R^2\times(0, T)$ with initial data $(\rho_0, u_0, d_0)$, if
\begin{eqnarray*}
&&\rho\in L^\infty(\mathbb R^2\times(0, T)),\quad\sqrt\rho u\in L^\infty(0, T; L^2(\mathbb R^2)),\\
&&u\in L^2(0, T; H^1(\mathbb R^2)),\quad \textmd{div}u=0\mbox{ in }\mathcal D'(Q_T),\\
&&\nabla d\in L^\infty(0, T; L^2(\mathbb R^2)),\quad |d|=1\mbox{ a.e. on }Q_T,
\end{eqnarray*}
and the following hold true
\begin{eqnarray*}
&&\int_{\mathbb R^2}\rho_0\varphi(x, 0)dx+\int_0^T\int_{\mathbb R^2}(\rho\varphi_t+\rho u\nabla\varphi)dxdt=0,\\
&&\int_0^t\int_{\mathbb R^2}[(\nabla u-\rho u\otimes u-\nabla d\odot\nabla d):\nabla\psi-\rho u\psi_t] dxdt=\int_{\mathbb R^2}\rho_0u_0\psi(x,0)dx,\\
&&\int_0^T\int_{\mathbb R^2}[\nabla d:\nabla\phi-d\phi_t+((u\cdot\nabla )d-|\nabla d|^2d)\phi]dxdt=\int_{\mathbb R^2}d_0\phi(x, 0)dx
\end{eqnarray*}
for all $\varphi\in C_0^\infty(\mathbb R^2\times[0, T))$, $\psi\in C_0^\infty(\mathbb R^2\times[0, T))$ with $\textmd{div}\psi=0$ and $\phi\in C_0^\infty(\mathbb R^2\times[0, T)$.
\end{definition}

Our main results are stated in the following two theorems.

\begin{theorem}\label{thm1.1}
Assume that $0<\underline\rho\leq\rho_0(x)\leq\overline\rho<\infty$, $\nabla\rho_0\in L^2(\mathbb R^2)$, $u_0\in H^2(\mathbb R^2)$, $\nabla d_0\in H^2(\mathbb R^2)$ with $\textmd{div}u_0=0$, $|d_0|=1$ and $d_{0,3}\geq\varepsilon_0>0$, where $\underline\rho$, $\overline\rho$ and $\varepsilon_0$ are positive constants. Then there exists a unique global strong solution $(\rho, u, d)$ to the system (\ref{1.1})--(\ref{1.5}), complemented with the initial data $(\rho, u, d)|_{t=0}=(\rho_0, u_0, d_0)$.
\end{theorem}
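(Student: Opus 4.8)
The plan is to prove Theorem~\ref{thm1.1} in two stages: first obtain a local strong solution, then derive enough a~priori estimates to continue it globally. For the local existence and uniqueness, I would invoke the result of Wen--Ding \cite{WD} on bounded domains and use the standard domain-expanding method: solve the problem on balls $B_R$ with appropriate (e.g.\ homogeneous Dirichlet or Neumann) boundary conditions, obtain local-in-time bounds uniform in $R$ in the norms appearing in the definition of strong solution, and pass to the limit $R\to\infty$ using weak-$*$ compactness, keeping track that the lower bound $\underline\rho$ on the density is preserved along the flow since $\mathrm{div}\,u=0$ (so $\rho$ is just transported). Uniqueness follows from a Gronwall argument on the difference of two solutions in the natural energy space.

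The heart of the matter is the global continuation, which reduces to showing that the strong-solution norms cannot blow up in finite time. By the structure emphasized after Lemma~\ref{lem1.2} in the introduction, all the higher-order estimates are controlled once one has an a~priori bound on $\|\nabla d\|_{L^4(\mathbb R^2\times(0,T))}$. So the crucial step is: \emph{establish $\int_0^T\!\int_{\mathbb R^2}|\nabla d|^4\,dx\,dt\le C(T)$ for any finite $T$.} To do this I would first record the basic energy law: testing \eqref{1.2} with $u$ and \eqref{1.4} with $-\Delta d-|\nabla d|^2 d$ and adding, the cross terms cancel and one gets
\[
\frac{d}{dt}\int_{\mathbb R^2}\Bigl(\tfrac12\rho|u|^2+\tfrac12|\nabla d|^2\Bigr)dx+\int_{\mathbb R^2}\bigl(|\nabla u|^2+|\Delta d+|\nabla d|^2 d|^2\bigr)dx=0,
\]
giving $u\in L^\infty_tL^2_x\cap L^2_tH^1_x$ and $\nabla d\in L^\infty_tL^2_x$ with $\Delta d+|\nabla d|^2d\in L^2_{t,x}$. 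This alone does not close the $L^4$ estimate, because the $d$-equation is supercritical. Here is where the geometric hypothesis $d_{0,3}\ge\varepsilon_0>0$ enters, via the rigidity theorem of Lei--Li--Zhang \cite{LEI}: using the transformation adapted to that rigidity, the condition that the third component of $d$ stays bounded away from zero (which is itself propagated: since $d_3$ satisfies a parabolic equation $\partial_t d_3+u\cdot\nabla d_3=\Delta d_3+|\nabla d|^2d_3$ with nonnegative zeroth-order coefficient, a maximum-principle/comparison argument keeps $d_3\ge\varepsilon_0$) converts the harmonic-map-type nonlinearity into something for which the $L^4$ bound on $\nabla d$ follows from the energy bound plus a Ladyzhenskaya-type interpolation $\|\nabla d\|_{L^4}^2\lesssim\|\nabla d\|_{L^2}\|\nabla^2 d\|_{L^2}$ together with absorbing the $\|\nabla^2 d\|_{L^2}$ contribution using the parabolic smoothing in the $d$-equation. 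This is the step I expect to be the main obstacle: making the rigidity structure interact cleanly with the density-dependent momentum equation and closing the estimate without smallness.

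Once $\|\nabla d\|_{L^4_{t,x}}$ is bounded, the remaining estimates are by now standard in this circle of ideas and proceed in a fixed order. First, treating $\mathrm{div}(\nabla d\odot\nabla d)$ as a forcing term, and using that $\rho$ is bounded above and below, a Hoff-type estimate — multiplying \eqref{1.2} by $u_t$ and using the $L^4$ control of $\nabla d$ to handle the coupling — yields $u\in L^\infty_tH^1_x$, $\sqrt\rho\,u_t\in L^2_tL^2_x$, and then by the Stokes estimate $u\in L^2_tW^{2,q}_x$, $\nabla p\in L^2_tL^q_x$. Second, differentiating \eqref{1.4} and using parabolic regularity with the now-controlled transport term gives $\nabla d\in L^\infty_tH^1_x\cap L^2_tH^2_x$ and $d_t\in L^2_tH^1_x$. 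Third, bootstrapping: differentiating the momentum equation in $t$, testing with $u_t$, one obtains $u_t\in L^\infty_tL^2_x\cap L^2_tH^1_x$ and hence $u\in L^\infty_tH^2_x$; correspondingly one more application of parabolic regularity to the $d$-equation gives $\nabla d\in L^\infty_tH^2_x$ and $d_t\in L^\infty_tH^1_x\cap L^2_tH^2_x$. Finally, from the transport equation \eqref{1.1}, $\partial_t\nabla\rho=-u\cdot\nabla\nabla\rho-\nabla u\cdot\nabla\rho$ and the now-established bound $\nabla u\in L^1_tL^\infty_x$ (via $u\in L^2_tW^{2,q}_x$, $q>2$) give $\nabla\rho\in L^\infty_tL^2_x$ and $\rho_t\in L^\infty_tL^2_x$. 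All these bounds are finite for every $T<\infty$, so the local solution extends to a global one, completing the proof.
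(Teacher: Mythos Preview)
Your plan is correct and matches the paper's approach closely: local existence via Wen--Ding on balls plus domain expansion (Proposition~\ref{prop3.1}), then global continuation by controlling $\int_0^T\|\nabla d\|_{L^4}^4\,dt$ via the Lei--Li--Zhang rigidity and bootstrapping the hierarchy $E_1\to E_2\to Q$ exactly as you describe.

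One clarification on the step you flag as the main obstacle: the rigidity theorem (Lemma~\ref{lem3.2}) is used more directly than your description suggests, and in particular it does not interact with the momentum equation at all. Since $|d|=1$ implies $-\Delta d\cdot d=|\nabla d|^2$, one has the exact identity $\|\Delta d+|\nabla d|^2 d\|_{L^2}^2=\|\Delta d\|_{L^2}^2-\|\nabla d\|_{L^4}^4$; the rigidity statement $\|\nabla d\|_{L^4}^4\le(1-\delta_0)\|\Delta d\|_{L^2}^2$ then immediately gives
\[
\|\Delta d+|\nabla d|^2 d\|_{L^2}^2\ \ge\ \tfrac{\delta_0}{2}\bigl(\|\Delta d\|_{L^2}^2+\|\nabla d\|_{L^4}^4\bigr),
\]
so the basic energy law already yields $\int_0^T(\|\Delta d\|_2^2+\|\nabla d\|_4^4)\,dt\le C$ with no further interpolation or absorption needed. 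After this, inequalities \eqref{3.2}--\eqref{3.3} close by Gronwall in the order $E_1$ then $E_2$, and \eqref{3.4} finishes the job; the density plays no role in this step beyond the bounds $\underline\rho\le\rho\le\overline\rho$ already used in the energy estimates.
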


\begin{theorem}\label{thm1.2}
Assume that $0\leq\rho_0(x)\leq\overline\rho<\infty$, $\rho_0-\tilde\rho\in L^q(\mathbb R^2)$ for some $q\in(1,\infty)$, $u_0\in L^2(\mathbb R^2)$, $\nabla d_0\in L^2(\mathbb R^2)$ with $\textmd{div}u_0=0$, $|d_0|=1$ and $d_{0,3}\geq\varepsilon_0>0$, where $\overline\rho$, $\tilde\rho
$ and $\varepsilon_0$ are positive constants, $d_{0,3}$ is the third component of the vector $d_0$. Then there exists a global weak solution $(\rho, u, d)$ to the system (\ref{1.1})--(\ref{1.5}), complemented with the initial data $(\rho, u, d)|_{t=0}=(\rho_0, u_0, d_0)$.
\end{theorem}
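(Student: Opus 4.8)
The plan is to obtain the weak solution as the limit of a sequence of global strong solutions furnished by Theorem \ref{thm1.1}. First I would regularize the initial data. Let $J_\varepsilon$ be a standard mollifier and set $\rho_0^\varepsilon=\rho_0*J_\varepsilon+\varepsilon$, so that $\varepsilon\le\rho_0^\varepsilon\le\overline\rho+\varepsilon$, $\nabla\rho_0^\varepsilon\in L^2(\mathbb R^2)$ and $\rho_0^\varepsilon-\tilde\rho\to\rho_0-\tilde\rho$ in $L^q(\mathbb R^2)$; let $u_0^\varepsilon=\mathbb P(u_0*J_\varepsilon)$, with $\mathbb P$ the Leray projection, so that $u_0^\varepsilon\in H^2(\mathbb R^2)$, $\mathrm{div}\,u_0^\varepsilon=0$ and $u_0^\varepsilon\to u_0$ in $L^2(\mathbb R^2)$; and let $d_0^\varepsilon=(d_0*J_\varepsilon)/|d_0*J_\varepsilon|$, which is well defined for $\varepsilon$ small since $|d_0|=1$ forces $|d_0*J_\varepsilon|\ge 1/2$, and which satisfies $|d_0^\varepsilon|=1$, $\nabla d_0^\varepsilon\in H^2(\mathbb R^2)$, $\nabla d_0^\varepsilon\to\nabla d_0$ in $L^2(\mathbb R^2)$ and, since mollification is an average and $d_{0,3}\ge\varepsilon_0$, $d_{0,3}^\varepsilon\ge\varepsilon_0/2$ for $\varepsilon$ small. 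Theorem \ref{thm1.1} then provides, for each such $\varepsilon$, a global strong solution $(\rho^\varepsilon,u^\varepsilon,d^\varepsilon,p^\varepsilon)$ on $Q_T$ with data $(\rho_0^\varepsilon,u_0^\varepsilon,d_0^\varepsilon)$.

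The next step is a collection of a priori bounds uniform in $\varepsilon$. From \eqref{1.1} and $\mathrm{div}\,u^\varepsilon=0$ one gets $0\le\rho^\varepsilon\le\overline\rho+\varepsilon$ and $\|\rho^\varepsilon(t)-\tilde\rho\|_{L^q}=\|\rho_0^\varepsilon-\tilde\rho\|_{L^q}$ for all $t$. The basic energy identity gives $\sqrt{\rho^\varepsilon}\,u^\varepsilon$ bounded in $L^\infty(0,T;L^2)$, $\nabla u^\varepsilon$ bounded in $L^2(Q_T)$, $\nabla d^\varepsilon$ bounded in $L^\infty(0,T;L^2)$, and $\Delta d^\varepsilon+|\nabla d^\varepsilon|^2d^\varepsilon$ bounded in $L^2(Q_T)$. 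A maximum principle for \eqref{1.4} yields $|d^\varepsilon|=1$ on $Q_T$, and then, from the scalar equation $\partial_td_3^\varepsilon+u^\varepsilon\cdot\nabla d_3^\varepsilon-\Delta d_3^\varepsilon=|\nabla d^\varepsilon|^2d_3^\varepsilon$ together with $d_{0,3}^\varepsilon\ge\varepsilon_0/2>0$, a further maximum principle argument gives $d_3^\varepsilon>0$ on $Q_T$. The crucial point is then to invoke the rigidity theorem of Lei--Li--Zhang \cite{LEI}: the sign condition $d_3^\varepsilon\ge 0$ prevents concentration of the Dirichlet energy of the map $d^\varepsilon$ and produces an a priori bound on $\|\nabla d^\varepsilon\|_{L^4(Q_T)}$ depending only on $T$, $\overline\rho$, $\|\sqrt{\rho_0}\,u_0\|_{L^2}$ and $\|\nabla d_0\|_{L^2}$, hence independent of $\varepsilon$. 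Consequently $|\nabla d^\varepsilon|^2d^\varepsilon$, $\Delta d^\varepsilon$ and $\partial_td^\varepsilon$ are bounded in $L^2(Q_T)$ and $\nabla d^\varepsilon$ is bounded in $L^2(0,T;H^1)$. Finally, because $\rho_0-\tilde\rho\in L^q$ with $\tilde\rho>0$, the set $\{\rho^\varepsilon\le\tilde\rho/2\}$ has finite measure uniformly in $\varepsilon$ and $t$, so a standard argument combining the bound on $\sqrt{\rho^\varepsilon}\,u^\varepsilon$ with that on $\nabla u^\varepsilon$ gives $u^\varepsilon$ bounded in $L^\infty(0,T;L^2)\cap L^2(0,T;H^1)$, hence in $L^4(Q_T)$ by the two-dimensional Ladyzhenskaya inequality, and $\rho^\varepsilon|u^\varepsilon|^2$ bounded in $L^2(Q_T)$.

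In the last step I would pass to the limit by compactness, following Lions \cite{Lions} and the Aubin--Lions--Simon lemma. The bounds above, supplemented by the bounds on $\partial_t\rho^\varepsilon$ from \eqref{1.1} and on $\partial_t(\mathbb P(\rho^\varepsilon u^\varepsilon))$ from \eqref{1.2}, allow one to extract a subsequence along which $\rho^\varepsilon\rightharpoonup\rho$ weakly-$*$ in $L^\infty(Q_T)$ and in $C([0,T];L^q_{\mathrm{w}}(\mathbb R^2))$ with $\rho^\varepsilon-\tilde\rho\rightharpoonup\rho-\tilde\rho$, $u^\varepsilon\rightharpoonup u$ weakly in $L^2(0,T;H^1)$ and weakly-$*$ in $L^\infty(0,T;L^2)$, $\rho^\varepsilon u^\varepsilon\to\rho u$ and $\sqrt{\rho^\varepsilon}\,u^\varepsilon\to\sqrt\rho\,u$ strongly in $L^2_{\mathrm{loc}}(Q_T)$, $d^\varepsilon\to d$ in $C_{\mathrm{loc}}(\overline{Q_T})$, and $\nabla d^\varepsilon\to\nabla d$ strongly in $L^2_{\mathrm{loc}}(Q_T)$; interpolating the last convergence with the uniform $L^4(Q_T)$ bound gives $\nabla d^\varepsilon\to\nabla d$ in $L^3_{\mathrm{loc}}(Q_T)$. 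These are exactly the modes of convergence needed to pass to the limit in the three weak formulations of \eqref{1.1}, \eqref{1.2} and \eqref{1.4}: the nonlinear terms $\rho^\varepsilon u^\varepsilon\otimes u^\varepsilon$, $\nabla d^\varepsilon\odot\nabla d^\varepsilon$, $(u^\varepsilon\cdot\nabla)d^\varepsilon$ and the supercritical term $|\nabla d^\varepsilon|^2d^\varepsilon$ all converge against smooth compactly supported test functions, the strong convergence of $d^\varepsilon$ preserves $|d|=1$ a.e. on $Q_T$, and the convergences are strong enough up to $t=0$ to recover the initial data. The main obstacle throughout is the supercritical nonlinearity $|\nabla d|^2d$: the energy estimate alone does not even place it in $L^1(Q_T)$ uniformly, let alone permit passage to the limit, and it is precisely here that the geometric hypothesis $d_{0,3}\ge\varepsilon_0>0$ and the rigidity theorem of \cite{LEI} are indispensable, as they upgrade $\nabla d^\varepsilon$ to a uniform $L^4(Q_T)$ bound. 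A secondary, more technical difficulty is the possible vacuum, which forbids controlling $u^\varepsilon$ in $L^2(Q_T)$ by the energy directly and must be circumvented using the far-field condition $\rho_0-\tilde\rho\in L^q$ with $\tilde\rho>0$ as indicated above.
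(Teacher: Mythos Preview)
Your approach is essentially the same as the paper's: regularize the data, apply Theorem~\ref{thm1.1} to obtain global strong solutions, derive the uniform basic energy bounds, invoke the Lei--Li--Zhang rigidity lemma (Lemma~\ref{lem3.2}) to upgrade $\nabla d^\varepsilon$ to $L^4(Q_T)$ uniformly, and pass to the limit using Lions' compactness (the paper's Lemma~\ref{lem4.1}) together with Aubin--Lions--Simon for the director.

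One small overstatement: the far-field argument you sketch gives only $\|u^\varepsilon(t)\|_2^2\le C+C\|\nabla u^\varepsilon(t)\|_2^2$, hence $u^\varepsilon\in L^2(0,T;L^2)$ and not $L^\infty(0,T;L^2)$, since $\|\nabla u^\varepsilon\|_2$ is controlled only in $L^2_t$; accordingly the $L^4(Q_T)$ bound on $u^\varepsilon$ and the $L^2(Q_T)$ bound on $\rho^\varepsilon|u^\varepsilon|^2$ do not follow as written. The paper indeed only obtains $u^j\in L^2(0,T;H^1)$ (see \eqref{4.3}--\eqref{4.5}) and treats $\rho^j u^j\otimes u^j$ via $\|\sqrt{\rho^j}u^j\|_{L^\infty_tL^2_x}\|u^j\|_{L^2_tL^r_x}$, $r<\infty$, before applying Lions' lemma. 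Since you ultimately rely on Lions' lemma anyway for the convergence of $\sqrt{\rho^\varepsilon}u^\varepsilon$, this does not break your argument, but the claim should be weakened accordingly.
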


\begin{remark}
In theorem \ref{thm1.1} we assume that the initial density is away from vacuum, while in Theorem \ref{thm1.2}, the initial density is allowed to have vacuum.
\end{remark}

The rest of this paper is arranged as follows: In section \ref{sec1}, we do some estimates on the local strong solutions in the balls $B_R(0)$, including the a priori energy estimates and the existence time of strong solutions independent of the parameter $R$; In section \ref{sec3}, by taking the limit $R\rightarrow\infty$ of the solutions obtained in the previous section, we firstly establish the local existence and uniqueness of strong solutions to the Cauchy problem of liquid crystal equations via the standard domain expanding argument, and then extend such local strong solution to be a global one; and finally, in section \ref{sec4}, we obtain the global weak solution by using compactness argument.

\section{Estimates on the local strong solutions in bounded domains}\label{sec2}
\allowdisplaybreaks
In this section, as preparations of the next section, we do some estimates on the local strong solutions in the balls $B_R(0)$, including the a priori energy estimates and the existence time of strong solutions independent of the parameter $R$.

We first state the following local existence and uniqueness of strong solutions to the Dirichlet problem in bounded domain.

\begin{lemma}\label{lem1.1}(Local existence in bounded domain, see \cite{WD}) Assume that $\rho_0\geq0$, $\rho_0\in H^1(\Omega)\cap L^\infty(\Omega)$, $u_0\in H_0^1(\Omega)\cap H^2(\Omega)$, $d_0\in H^2(\Omega)$ with $\textmd{div}u_0=0$, $|d_0|=1$ on $\overline\Omega$, and the following compatible condition is valid
$$
\Delta u_0-\nabla p_0-\textmd{div}(\nabla d_0\odot\nabla d_0)=\sqrt{\rho_0}g_0
$$
for $(\rho_0, g_0)\in H^1(\Omega)\times L^2(\Omega)$. Then there exists a constant $T$, such that the system (\ref{1.1})--(\ref{1.5})
complemented with the initial and boundary conditions
\begin{eqnarray*}
&&(\rho, u, d)|_{t=0}=(\rho_0, u_0, d_0),\\
&&(u, d)|_{\partial\Omega}=(0, d_0),
\end{eqnarray*}
has a unique solution $(\rho, u, d, p)$ on $Q_T=\Omega\times(0, T)$, satisfying
\begin{eqnarray*}
&&\rho\in L^\infty(0, T; H^1(\Omega)\cap L^\infty(Q_T),\quad\rho_t\in L^\infty(0, T; L^2(\Omega)),\\
&&u\in L^\infty(0, T; H_0^1(\Omega)\cap H^2(\Omega)),\quad u_t\in L^2(0, T; H_0^1(\Omega)),\\
&&p\in L^\infty(0, T; H^1(\Omega)),\\
&&d\in L^\infty(0, T; H^3(\Omega)), \quad d_t\in L^\infty(0, T; H_0^1(\Omega)).
\end{eqnarray*}
\end{lemma}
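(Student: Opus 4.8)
The plan is to recover the construction of \cite{WD} by combining a density regularization (to handle possible vacuum) with a linearized iteration scheme. First I would replace $\rho_0$ by $\rho_0^\delta=\rho_0+\delta$, $\delta\in(0,1)$, so that $\delta\le\rho_0^\delta\le\overline\rho+1$, solve the system for this strictly positive initial density, and only at the end let $\delta\to0$. For fixed $\delta$ I would iterate as follows: starting from $(u^0,d^0)\equiv(u_0,d_0)$, and given $(u^k,d^k)$ on $[0,T_*]$, let $\rho^{k+1}$ solve the linear transport equation $\partial_t\rho^{k+1}+\textmd{div}(\rho^{k+1}u^k)=0$ with $\rho^{k+1}|_{t=0}=\rho_0^\delta$ (since $\textmd{div}\,u^k=0$, the pointwise bounds $\delta\le\rho^{k+1}\le\overline\rho+1$ persist and $\nabla\rho^{k+1},\partial_t\rho^{k+1}$ are controlled by $\nabla u^k$); then let $u^{k+1}$ solve the linear Stokes-type problem
$$\rho^{k+1}\partial_tu^{k+1}-\Delta u^{k+1}+\nabla p^{k+1}=-\rho^{k+1}(u^k\cdot\nabla)u^k-\textmd{div}(\nabla d^k\odot\nabla d^k),\qquad\textmd{div}\,u^{k+1}=0,$$
with $u^{k+1}|_{\partial\Omega}=0$ and $u^{k+1}|_{t=0}=u_0$; and finally let $d^{k+1}$ solve the linear parabolic problem $\partial_td^{k+1}+(u^k\cdot\nabla)d^{k+1}=\Delta d^{k+1}+|\nabla d^k|^2d^k$ with $d^{k+1}|_{\partial\Omega}=d_0$ and $d^{k+1}|_{t=0}=d_0$.

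The second step is to derive bounds on $(\rho^{k+1},u^{k+1},d^{k+1})$ uniform in $k$ (and in $\delta$) on a time $T_*$ depending only on the data. The transport equation gives the $L^\infty_tH^1_x\cap L^\infty$ bound for $\rho^{k+1}$ and the $L^\infty_tL^2_x$ bound for $\partial_t\rho^{k+1}$. Testing the momentum equation with $\partial_tu^{k+1}$ and invoking Stokes elliptic regularity with $\Delta u^{k+1}$ yields $u^{k+1}\in L^\infty_tH^2_x$, $\sqrt{\rho^{k+1}}\,\partial_tu^{k+1}\in L^\infty_tL^2_x$, $\partial_tu^{k+1}\in L^2_tH^1_x$ and $p^{k+1}\in L^\infty_tH^1_x$; the compatibility hypothesis enters precisely here, to bound $\sqrt{\rho^{k+1}}\,\partial_tu^{k+1}$ at $t=0$ — one differentiates the equation in $t$, evaluates at $0$, and uses $\Delta u_0-\nabla p_0-\textmd{div}(\nabla d_0\odot\nabla d_0)=\sqrt{\rho_0}g_0$. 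Parabolic regularity for the $d^{k+1}$-equation, tested with $\Delta^2 d^{k+1}$ and with $\partial_td^{k+1}$, gives $d^{k+1}\in L^\infty_tH^3_x$ and $\partial_td^{k+1}\in L^\infty_tH^1_x$. The nonlinear right-hand sides $\rho^{k+1}(u^k\cdot\nabla)u^k$, $\textmd{div}(\nabla d^k\odot\nabla d^k)$ and $|\nabla d^k|^2d^k$ are absorbed by the diffusion after using the two-dimensional Gagliardo--Nirenberg and Ladyzhenskaya inequalities, which closes the estimates on a short interval $T_*$ independent of $k$ and $\delta$.

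The third step is convergence: I would show the map $(u^k,d^k)\mapsto(u^{k+1},d^{k+1})$ is a contraction in the weaker metric $\|u\|_{L^\infty_tL^2_x}+\|d\|_{L^\infty_tH^1_x}$, estimating $\rho^{k+1}-\rho^k$, $u^{k+1}-u^k$, $d^{k+1}-d^k$ through the low-order energy identities, after possibly shrinking $T_*$. The limit $(\rho^\delta,u^\delta,d^\delta,p^\delta)$ then solves the regularized system and retains the higher-order bounds by weak lower semicontinuity; letting $\delta\to0$ produces the solution for $\rho_0\ge0$. One must also verify the constraint $|d|=1$: with $e:=|d|^2-1$ one computes $\partial_te+(u\cdot\nabla)e=\Delta e+2|\nabla d|^2e$, and since $e|_{t=0}=0$ and $e|_{\partial\Omega}=0$, a Gronwall estimate forces $e\equiv0$. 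Uniqueness follows from the same low-order energy estimate applied to the difference of two solutions with identical data and the a priori regularity already established.

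The main obstacle I anticipate is closing the higher-order estimates near $t=0$ in the vacuum case: without a positive lower bound on $\rho$, the quantity $\partial_tu(0)$ need not lie in $L^2$, so the compatibility condition together with a careful time-differentiation of the momentum equation is indispensable, and keeping every constant independent of $\delta$ through this step is the delicate point. By contrast the supercritical term $|\nabla d|^2d$ is comparatively harmless here, because in two dimensions the $H^3_x$-control of $d$ places $|\nabla d|^2$ in favorable function spaces.
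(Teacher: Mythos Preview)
The paper does not prove this lemma at all: it is stated with the attribution ``(Local existence in bounded domain, see \cite{WD})'' and is quoted verbatim as an external input, with no argument supplied. So there is nothing in the paper to compare your proposal against.

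That said, your outline is essentially the construction carried out in \cite{WD}: regularize the density to remove vacuum, run a linearized iteration (transport for $\rho^{k+1}$, linear Stokes for $u^{k+1}$, linear heat for $d^{k+1}$), close uniform higher-order estimates on a short time via Ladyzhenskaya/Gagliardo--Nirenberg, obtain convergence by contraction in a weaker topology, pass $\delta\to0$ using the compatibility condition to control $\sqrt{\rho}\,u_t$ at $t=0$, and recover $|d|=1$ from the scalar equation for $|d|^2-1$. Your identification of the delicate point---keeping the $t=0$ bound on $\sqrt{\rho}\,u_t$ independent of $\delta$ in the presence of vacuum---is exactly where the compatibility hypothesis is used in \cite{WD}. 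One minor remark: in \cite{WD} the result is proved in bounded domains of dimension two or three, so your closing comment that the $|\nabla d|^2d$ term is harmless ``in two dimensions'' slightly undersells the argument; the $H^3$ control of $d$ suffices in either dimension for this local-in-time step.
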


Now, we state and prove our main result of this section, concerning the energy estimates on the local strong solutions and the existence time independent of the parameter $R$.

\begin{lemma}\label{lem1.2}(Estimates on the existence time and a priori estimates)
Let all the assumptions in Lemma \ref{lem1.1} hod true, and we assume in addition that $0<\underline\rho\leq\rho_0(x)\leq\overline\rho$ for positive constants $\underline\rho$ and $\overline\rho$. Let $(\rho, u, d, p)$ be the solution given in Lemma \ref{lem1.1}, with $\Omega=B_R(0)$, $R\geq1$. Set $e_0=C(\underline\rho, \overline\rho)(1+\|u_0\|_2^2+\|\nabla d_0\|_2^2)$,
$$
e_1=C(\overline\rho,\underline\rho)[1+\|u_0\|_4^4+\|\nabla u_0\|_2^2+\|\nabla d_0\|_{W^{1,4}}^4+\|\nabla d_0\|_6^6+\|\nabla^2d_0\|_{H^1}^2+(\|\nabla d_0\|_2^2+\|\nabla d_0\|_4^4)\|\nabla^2d_0\|_2^2],
$$
and
$$
e_2=C(\overline\rho, \underline\rho)(1+\|u_0\|_4^4+\|\nabla u_0\|_4^4+\|\nabla^2u_0\|_2^2+\|\nabla p_0\|_2^2+\|\nabla d_0\|_4^4+\|\nabla d_0\|_8^8+\|\nabla^2d_0\|_4^4+\|\nabla^3d_0\|_2^2),
$$
where $C(\overline\rho, \underline\rho)$ is a positive constant depending only on $\overline\rho$ and $\underline\rho$.

Then the existence time can be chosen depending only on $e=e_0+e_1+e_2$, and the following inequalities hold true
\begin{align*}
\sup_{0\leq s\leq t}&(\|u\|_2^2+\|\nabla d\|_2^2)+\int_0^t(\|\nabla u\|_2^2+\|\Delta d+|\nabla d|^2d\|_2^2)ds\leq e_0,\\
E_1(t)\leq& e_1(t+1)+e_0\int_0^t(\|\nabla u\|_2^2+\|\nabla d\|_4^4)(E_1(s)+e_1)ds,\\
E_2(t)\leq& e+e^5\int_0^t(1+E_1^5(s))(1+E_2(s))ds,\\
E(t)\leq& C(e)
\end{align*}
for any $0\leq t\leq T$, and
\begin{align*}
\sup_{0\leq t\leq T}&(\|\nabla\rho\|_2^2+\|\rho_t\|_2^2+\|\nabla p\|_2^2+\|u\|_{H^2}^2+\|\nabla d\|_{H^2}^2)\\
&+\int_0^T(\|\nabla^2u\|_q^2+\|u_t\|_{H^1}^2+\|d_t\|_{H^2}^2+\|d_{tt}\|_2^2)dt\leq C(e,\|\nabla\rho_0\|_2),
\end{align*}
where
\begin{eqnarray*}
&&E_1(t)=\sup_{0\leq s\leq t}(\|\nabla u\|_2^2+\|\nabla^2d\|_2^2)+\int_0^t(\|u_t\|_2^2+\|\nabla^2u\|_2^2+\|\nabla d_t\|_2^2+\|\nabla^3d\|_2^2)ds,\\
&&E_2(t)=\sup_{0\leq s\leq t}(\|u_t\|_2^2+\|\nabla d_t\|_2^2)+\int_0^t(\|\nabla u_t\|_2^2+\|d_{tt}\|_2^2+\|\nabla^2d_t\|_2^2)ds,
\end{eqnarray*}
and
$$
E(t)=E_1(t)+E_2(t)+e.
$$
\end{lemma}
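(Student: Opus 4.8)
The strategy is to establish the four scalar inequalities for $E_0$, $E_1$, $E_2$, $E$ in order, each one by standard energy estimates on the Dirichlet problem in $B_R(0)$, taking care that every constant depends only on $\underline\rho,\overline\rho$ (and through $e$ on the initial data), never on $R$; then Gronwall-type arguments close the hierarchy and fix the $R$-independent existence time; and finally the density/pressure bounds follow by transporting $\nabla\rho$ along the flow and applying elliptic regularity to the Stokes system. Throughout, the key structural fact from the $d$-equation is that multiplying \eqref{1.4} by $-(\Delta d+|\nabla d|^2d)$ and using $|d|=1$ produces the dissipation term $\|\Delta d+|\nabla d|^2 d\|_2^2$, which is why the whole estimate tower is built on controlling $\nabla d$ in $L^4_{t,x}$.

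First I would prove the basic energy identity: testing \eqref{1.2} by $u$, \eqref{1.4} by $-(\Delta d+|\nabla d|^2d)$, using $\mathrm{div}\,u=0$, integrating by parts, and cancelling the cross terms coming from $\mathrm{div}(\nabla d\odot\nabla d)$ against $(u\cdot\nabla)d$; since $0<\underline\rho\le\rho\le\overline\rho$ is propagated by \eqref{1.1}, this yields the first displayed inequality with $e_0=C(\underline\rho,\overline\rho)(1+\|u_0\|_2^2+\|\nabla d_0\|_2^2)$. Next, for $E_1$, I would test \eqref{1.2} by $u_t$ to get $\tfrac{d}{dt}\|\nabla u\|_2^2+\|\sqrt\rho u_t\|_2^2$ up to terms $\int\rho|u||\nabla u||u_t|$ and $\int|\nabla d||\nabla^2 d||u_t|$, and differentiate \eqref{1.4} (or test by $-\Delta d_t$ and $\Delta^2 d$) to get $\tfrac{d}{dt}\|\nabla^2 d\|_2^2+\|\nabla d_t\|_2^2+\|\nabla^3 d\|_2^2$ up to terms involving $\|\nabla d\|_4^4$, $\|\nabla u\|_2^2$ and the $E_1$ quantities themselves; the Stokes estimate $\|\nabla^2 u\|_2\le C(\|\rho u_t\|_2+\|\rho(u\cdot\nabla)u\|_2+\|\nabla d\cdot\nabla^2 d\|_2)$ is used to absorb $\|\nabla^2 u\|_2$. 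Gagliardo–Nirenberg interpolation in two dimensions (e.g. $\|f\|_4^2\le C\|f\|_2\|\nabla f\|_2$, $\|f\|_\infty^2\le C\|f\|_2\|f\|_{H^2}$) converts all nonlinear terms into the stated form, producing the second displayed inequality $E_1(t)\le e_1(t+1)+e_0\int_0^t(\|\nabla u\|_2^2+\|\nabla d\|_4^4)(E_1+e_1)\,ds$.

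Then for $E_2$ I would differentiate \eqref{1.2} in $t$ and test by $u_t$, and differentiate \eqref{1.4} in $t$ and test by $-\Delta d_t$; the worst terms — $\int\rho_t|u_t||\nabla u|$, $\int\rho|u||\nabla u_t||u_t|$, $\int\partial_t(\nabla d\odot\nabla d):\nabla u_t$, and the analogous $d$-terms — are all bounded, via two-dimensional interpolation and the continuity equation $\rho_t=-u\cdot\nabla\rho-\rho\,\mathrm{div}\,u=-u\cdot\nabla\rho$, by polynomial expressions in $E_1$ and $E_2$; this gives $E_2(t)\le e+e^5\int_0^t(1+E_1^5)(1+E_2)\,ds$. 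With the three differential inequalities in hand, a bootstrap on a short time interval $[0,T]$ with $T=T(e)$ keeps $E_1,E_2\le C(e)$ (first close $E_1$ using the $e_0$-bound and $\int_0^t\|\nabla d\|_4^4\le \int_0^t\|\nabla d\|_2\|\nabla^2 d\|_2^? \cdots$ — here is where one genuinely needs that the right side is $o(1)$ as $t\to0$ so that a continuity argument applies; then close $E_2$ similarly), yielding $E(t)\le C(e)$; crucially $T$ and $C(e)$ do not see $R$. Finally, to get the last display, I would apply $\nabla$ to \eqref{1.1}, test by $\nabla\rho$, and use $\|\nabla u\|_\infty\le C\|u\|_{W^{2,q}}$ (which is in $L^2_t$ by the already-established $E$-bound and Stokes $L^q$-regularity) to control $\|\nabla\rho\|_2$ by Gronwall; then $\rho_t=-u\cdot\nabla\rho$ and $\nabla p$ from the Stokes system $-\Delta u+\nabla p=-\rho(u_t+u\cdot\nabla u)-\mathrm{div}(\nabla d\odot\nabla d)$ give the remaining norms, with the final constant legitimately allowed to depend on $\|\nabla\rho_0\|_2$.

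The main obstacle I anticipate is closing the $E_1$ inequality: the factor $\|\nabla d\|_4^4$ multiplying $(E_1+e_1)$ on the right is exactly critical in two dimensions (by $\|\nabla d\|_4^4\le C\|\nabla d\|_2^2\|\nabla^2 d\|_2^2\le C e_0\, E_1$), so naive Gronwall gives only a local-in-time bound with a time possibly shrinking in $R$ unless one is careful to extract smallness from the $\int_0^t$ factor and from the $e_0$-energy bound; getting $T=T(e)$ independent of $R$ requires arranging the a priori bootstrap so that the bad coefficient is absorbed into the short time interval rather than into a large constant. The second-order $d$-estimates (terms like $\int|\nabla d|^2|\nabla^2 d|^2$ and $\int|d||\nabla d|^3|\nabla^3 d|$ coming from differentiating $|\nabla d|^2 d$) are technically heavy but follow the same interpolation pattern once the $L^4_{t,x}$ control of $\nabla d$ from the $e_0$-estimate is in place.
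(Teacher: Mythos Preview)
Your overall strategy matches the paper's almost step for step: basic energy, then test the momentum equation by $u_t$ and apply $\Delta$ to the $d$-equation for $E_1$, then differentiate in $t$ for $E_2$, then a continuity/bootstrap argument to fix $T=T(e)$, and finally transport $\nabla\rho$ and use Stokes $L^q$ regularity. The interpolation bookkeeping you outline is exactly what the paper does.

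There is, however, one genuine technical point you do not address and which the paper treats with some care: on $B_R$ the direction field carries \emph{inhomogeneous} Dirichlet data $d|_{\partial B_R}=d_0$. This has two consequences for the $E_1$ step. First, when you integrate $-\int_\Omega \Delta^2 d\cdot\Delta d\,dx$ by parts, a boundary term $-\int_{\partial\Omega}\partial_n\Delta d\cdot\Delta d\,dS$ survives; the paper uses the equation on the boundary (where $u=0$, $d_t=0$) to read off $\Delta d|_{\partial\Omega}=-|\nabla d|^2 d$, reduces the boundary integral to $\int_{\partial B_R}|\nabla d|^3|\nabla^2 d|\,dS$, and then converts it back to a volume integral via the divergence theorem with the vector field $x/R$, so that $R\ge 1$ makes the constant $R$-independent. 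Second, elliptic regularity and the 2D Ladyzhenskaya/Gagliardo--Nirenberg inequalities applied directly to $d$, $\nabla d$, $\nabla^2 d$ would carry domain-dependent (hence $R$-dependent) constants; the paper systematically splits off $d_0$ and applies these inequalities to $d-d_0$, which has zero Dirichlet data, e.g.\ $\|\nabla^3 d\|_2\le C\|\Delta(d-d_0)\|_{H^1}+\|\nabla^3 d_0\|_2$ and $\|\nabla d\|_4^4\le C\|\nabla(d-d_0)\|_2^2\|\nabla^2(d-d_0)\|_2^2+C\|\nabla d_0\|_4^4$. This is precisely why $e_1$ and $e_2$ contain those particular combinations of $d_0$-norms. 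Your plan asserts ``never on $R$'' but does not flag this as the place where $R$-dependence would otherwise enter; without the boundary trick and the $d-d_0$ decomposition, the $E_1$ inequality as you wrote it would not close with $R$-independent constants.
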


\begin{proof}
Multiplying (\ref{1.2}) by $u$, (\ref{1.4}) by $-\Delta d$, summing the resulting equations up, integrating over $\Omega$, it follows from (\ref{1.1}), (\ref{1.3}) and (\ref{1.5}) that
$$
\frac{d}{dt}\left(\frac{\rho}{2}|u|^2+\frac{|\nabla d|^2}{2}\right)dx+\int_\Omega(|\nabla u|^2+|\Delta d+|\nabla d|^2d|^2)dx=0,
$$
and thus
\begin{equation}
\sup_{0\leq s\leq t}(\|\sqrt\rho u\|_2^2+\|\nabla d\|_2^2)+2\int_0^t(\|\nabla u\|_2^2+\|\Delta d+|\nabla d|^2d\|_2^2)ds\leq \|\sqrt{\rho_0}u_0\|_2^2+\|\nabla d_0\|_2^2\leq e_0.\label{1.6}
\end{equation}
Multiplying (\ref{1.2}) by $u_t$ and integrating over $\Omega$ yields
$$
\frac{d}{dt}\int_\Omega\frac{|\nabla u|^2}{2}dx+\int_\Omega\rho|u_t|^2dx=-\int_\Omega[\rho(u\cdot\nabla)u\cdot u_t+\Delta d\cdot\nabla d\cdot u_t]dx,
$$
from which, noticing that $0<\underline\rho\leq\rho(x, t)\leq\overline\rho$, we obtain, by Cauchy inequality, that
\begin{equation}\label{1.7}
\frac{d}{dt}\int_\Omega|\nabla u|^2dx+\underline\rho\int_\Omega\rho|u_t|^2dx\leq C(\overline\rho, \underline\rho)\int_\Omega(|u|^2|\nabla u|^2+|\nabla d|^2|\Delta d|^2)dx.
\end{equation}
Applying $H^2$ estimates to the Stokes equations, it follows from (\ref{1.2}) that
\begin{align*}
&\|\nabla^2u\|_2^2+\|\nabla p\|_2^2\leq C\|\rho(u_t+(u\cdot\nabla)u)-\Delta d\cdot\nabla d\|_2^2\\
\leq&C(\overline\rho)(\|u_t\|_2^2+\||u||\nabla u|\|_2^2+\||\Delta d||\nabla d|\|_2^2),
\end{align*}
which, combined with (\ref{1.7}), gives
\begin{align}
&\sup_{0\leq s\leq t}\|\nabla u\|_2^2+\int_0^t(\|u_t\|_2^2+\|\nabla^2u\|_2^2+\|\nabla p\|_2^2)ds\nonumber\\
\leq&C(\overline\rho, \underline\rho)\int_0^t\int_\Omega(|u|^2|\nabla u|^2+|\nabla d|^2|\Delta d|^2)dxds+C(\overline\rho, \underline\rho)\|\nabla u_0\|_2^2.\label{1.8}
\end{align}
Taking operator $\Delta$ on both sides of equation (\ref{1.4}) yields
\begin{align*}
&\Delta d_t+(u\cdot\nabla)\Delta d+2(\nabla u_i\cdot\partial_i\nabla)d+(\Delta u\cdot\nabla)d\\
=&\Delta^2d+|\nabla d|^2\Delta d+2\nabla|\nabla d|^2\cdot\nabla d+2(\nabla d\cdot\nabla \Delta d)d+2|\nabla^2d|^2d.
\end{align*}
Multiplying the above equation by $\Delta d$ and integrating over $\Omega$ yields
\begin{align}
\frac{d}{dt}\int_\Omega\frac{|\Delta d|^2}{2}dx-\int_\Omega\Delta^2d\Delta d dx\leq&\int_\Omega(2|\nabla u||\nabla^2d|^2+|\Delta u||\nabla d||\nabla^2d|\nonumber\\
&+7|\nabla d|^2|\nabla^2d|^2+2|\nabla d|^3|\nabla\Delta d|)dx,\label{1.14}
\end{align}
where we have used $-\Delta d\cdot d=|\nabla d|^2$ guaranteed by $|d|=1$. Integrating by parts deduce, noticing that $\Delta d|_{\partial\Omega}=-|\nabla d|^2d$ by equation (\ref{1.4}), that
\begin{align*}
-\int_\Omega\Delta^2d\Delta ddx=&\int_\Omega|\nabla\Delta d|^2dx-\int_{\partial\Omega}\frac{\partial\Delta d}{\partial n}\Delta d dS\\
=&\int_\Omega|\nabla\Delta d|^2dx+\int_{\partial\Omega}\frac{\partial\Delta d}{\partial n}|\nabla d|^2d dS\\
=&\int_\Omega|\nabla\Delta d|^2dx+\int_{\partial\Omega}(|\nabla d|^2\frac{\partial(\Delta d\cdot d)}{\partial n}-\Delta d|\nabla d|^2\frac{\partial d}{\partial n})dS\\
\geq&\int_\Omega|\nabla\Delta d|^2dx-3\int_{\partial\Omega}|\nabla d|^3|\nabla^2d|dS,
\end{align*}
which, combined with (\ref{1.14}), gives
\begin{align*}
&\frac{d}{dt}\int_\Omega\frac{|\Delta d|^2}{2}dx+\int_\Omega|\nabla\Delta d|^2dx\\
\leq&C\int_\Omega(|\nabla u||\nabla^2d|^2+|\nabla d|^2|\nabla^2d|^2+|\Delta u||\nabla d||\nabla^2d|+|\nabla d|^3|\nabla\Delta d|)dx\\
&+\int_{\partial\Omega}|\nabla d|^3|\nabla^2d|dS,
\end{align*}
and thus
\begin{align}
&\frac{d}{dt}\int_\Omega\frac{|\Delta d|^2}{2}dx+\int_\Omega|\nabla\Delta d|^2dx\nonumber\\
\leq&\varepsilon\int_\Omega|\Delta u|^2dx+C\int_\Omega(|\nabla u||\nabla^2d|^2+|\nabla d|^2|\nabla^2d|^2+|\nabla d|^6)dx
+\int_{\partial\Omega}|\nabla d|^3|\nabla^2d|dS.\label{1.15-1}
\end{align}
We compute
\begin{align*}
&\int_{\partial\Omega}|\nabla d|^3|\nabla^2d|dS=\int_{\partial B_R(0)}|\nabla d|^3|\nabla^2d|\frac{x}{R}\cdot ndS
=\int_{B_R(0)}\textmd{div}\left(\frac{x}{R}|\nabla d|^3|\nabla^2d|\right)dx\\
\leq&\frac{2}{R}\int_{B_R(0)}|\nabla d|^3|\nabla^2d|dx+C\int_{B_R(0)}(|\nabla d|^2|\nabla^2d|^2+|\nabla d|^3|\nabla^3d|)dx\\
\leq&C\int_{B_R(0)}(|\nabla d|^3|\nabla^2d|+|\nabla d|^2|\nabla^2d|^2+|\nabla d|^6)dx+\varepsilon\int_{B_R(0)}|\nabla^3d|^2dx,
\end{align*}
which, combined with (\ref{1.15-1}), gives
\begin{align}
&\frac{d}{dt}\int_\Omega|\Delta d|^2dx+\int_\Omega|\nabla\Delta d|^2dx\nonumber\\
\leq&\varepsilon\int_\Omega(|\Delta u|^2+|\nabla^3d|^2)dx+C\int_\Omega(|\nabla d|^2|\nabla^2d|^2+|\nabla u||\nabla^2d|^2+|\nabla d|^3|\nabla^2d|+|\nabla d|^6)dx.\label{1.16-0}
\end{align}
Elliptic estimates give
\begin{align}
&\int_\Omega|\nabla^3d|^2dx\leq\|\nabla^3(d-d_0)\|_2^2+\|\nabla^3d_0\|_2^2\nonumber\\
\leq& C\|\nabla\Delta(d-d_0)\|_{H^1}^2+\|\nabla^3d_0\|_2^2\leq C(\|\Delta d\|_{H^1}^2+\|\nabla^2d_0\|_{H^1}^2).\label{1.16-1}
\end{align}
Combining (\ref{1.16-0}) with (\ref{1.16-1}), together with (\ref{1.8}), there holds
\begin{align}
&\sup_{0\leq s\leq t}(\|\nabla u\|_2^2+\|\nabla^2d\|_2^2)+\int_0^t(\|u_t\|_2^2+\|\nabla^2u\|_2^2+\|\nabla^3d\|_2^2)ds\nonumber\\
\leq&C(\overline\rho, \underline\rho)(\|\nabla u_0\|_2^2+\|\nabla^2d_0\|_2^2+\|\nabla^2d_0\|_{H^1}^2t)+C(\overline\rho, \underline\rho)\int_0^t\int_\Omega(|u|^2|\nabla u|^2\nonumber\\
&+|\nabla d|^2|\nabla^2d|^2+|\nabla u||\nabla^2d|^2+|\nabla^2d|^2+|\nabla d|^6)dx. \label{1.16-2}
\end{align}
By Ladyzhenskaya inequality, Gagliado-Nirenberg inequality, H\"older inequality and Cauchy inequality, we can estimate the terms on the right hand side of (\ref{1.6-2}) as follows
\begin{align*}
I_1=&\int_0^t\int_\Omega|u|^2|\nabla u|^2dxds\leq\int_0^t\|u\|_4^2\|\nabla u\|_4^2ds\\
\leq&C\int_0^t\|u\|_2\|\nabla u\|_2\|\nabla u\|_2\|\nabla^2u\|_2ds\leq\varepsilon\int_0^t\|\nabla^2u\|_2^2ds+C\int_0^t\|u\|_2^2\|\nabla u\|_2^4ds,\\
I_2=&\int_0^t\int_\Omega(|\nabla d|^2+|\nabla u|)|\nabla^2d|^2dxds\leq\int_0^t(\|\nabla d\|_4^2+\|\nabla u\|_2)\|\nabla^2d\|_4^2ds\\
\leq&C\int_0^t(\|\nabla d\|_4^2+\|\nabla u\|_2)(\|\nabla^2(d-d_0)\|_4^2+\|\nabla^2d_0\|_4^2)ds\\
\leq&C\int_0^t(\|\nabla d\|_4^2+\|\nabla u\|_2)(\|\nabla^2(d-d_0)\|_2\|\nabla^3(d-d_0)\|_2+\|\nabla^2d_0\|_4^2)ds\\
\leq&C\int_0^t(\|\nabla d\|_4^2+\|\nabla u\|_2)(\|\nabla^2d_0\|_4^2+\|\nabla^2d_0\|_2\|\nabla^3d_0\|_2+\|\nabla^2d_0\|_2\|\nabla^3d\|_2\\
&+\|\nabla^3d_0\|_2\|\nabla^2d\|_2+\|\nabla^2d\|_2\|\nabla^3d\|_2+\|\nabla^2d_0\|_4^2)ds\\
\leq&\varepsilon\int_0^t\|\nabla^3d\|_2^2ds+C\int_0^t(\|\nabla d\|_4^4+\|\nabla u\|_2^2)(\|\nabla^2d\|_2^2+\|\nabla^2 d_0\|_2^2+1)ds\\
&+Ct(\|\nabla^2d_0\|_2^2+\|\nabla^3d_0\|_2^2+\|\nabla^2d_0\|_4^4),\\
I_3=&\int_0^t\int_\Omega|\nabla d|^6dxds\leq C\int_0^t\int_\Omega(|\nabla(d-d_0)|^6+|\nabla d_0|^6)dxds\\
\leq&C\int_0^t(\|\nabla(d-d_0)\|_4^4\|\nabla^2(d-d_0)\|_2^2+\|\nabla d_0\|_6^6)ds\\
\leq&C\int_0^t(\|\nabla d\|_4^4\|\nabla^2d\|_2^2+\|\nabla^2d_0\|_2^2\|\nabla d\|_4^4+\|\nabla d_0\|_4^4\|\nabla^2d\|_2^2\\
&+\|\nabla d_0\|_4^4\|\nabla^2d_0\|_2^2+\|\nabla d_0\|_6^6)ds.
\end{align*}
Substituting these inequalities into (\ref{1.16-2}) yields
\begin{align}
&\sup_{0\leq s\leq t}(\|\nabla u\|_2^2+\|\nabla^2d\|_2^2)+\int_0^t(\|\nabla^2u\|_2^2+\|u_t\|_2^2+\|\nabla^3d\|_2^2)ds\nonumber\\
\leq&C(\overline\rho,\underline\rho)[\|\nabla u_0\|_2^2+\|\nabla^2d_0\|_2^2+(\|\nabla^2d_0\|_{H^1}^2+\|\nabla^2d_0\|_4^4+\|\nabla d_0\|_6^6+\|\nabla d_0\|_4^4\|\nabla^2d_0\|_2^2)t]\nonumber\\
&+C(\overline\rho,\underline\rho)\int_0^t[\|u\|_2^2\|\nabla u\|_2^4+(\|\nabla d\|_4^4+\|\nabla u\|_2^2)(\|\nabla^2d\|_2^2+\|\nabla^2d_0\|_2^2+1)\nonumber\\
&+\|\nabla d_0\|_4^4\|\nabla^2d\|_2^2]ds.\label{1.16-3-1}
\end{align}
Set
$$
e_1=C(\overline\rho,\underline\rho)[1+\|u_0\|_4^4+\|\nabla u_0\|_2^2+\|\nabla d_0\|_{W^{1,4}}^4+\|\nabla d_0\|_6^6+\|\nabla^2d_0\|_{H^1}^2+(\|\nabla d_0\|_2^2+\|\nabla d_0\|_4^4)\|\nabla^2d_0\|_2^2],
$$
and
$$
E_1(t)=\sup_{0\leq s\leq t}(\|\nabla u\|_2^2+\|d_t\|_2^2+\|\nabla^2d\|_2^2)+\int_0^t(\|u_t\|_2^2+\|\nabla^2u\|_2^2+\|\nabla d_t\|_2^2+\|\nabla^3d\|_2^2)ds,
$$
It follows from (\ref{1.6}) and (\ref{1.16-3-1}) that
\begin{equation}\label{1.16-3}
E_1(t)\leq e_1(t+1)+e_0\int_0^t(\|\nabla u\|_2^2+\|\nabla d\|_4^4)(E_1(s)+e_1)ds.
\end{equation}
By Ladyzhenskaya inequality, it follows from elliptic estimates that
\begin{align}
\|\nabla d\|_4^4\leq& C(\|\nabla(d-d_0)\|_4^4+\|\nabla d_0\|_4^4)\leq C(\|\nabla(d-d_0)\|_2^2\|\nabla^2(d-d_0)\|_2^2+\|\nabla d_0\|_4^4)\nonumber\\
\leq&C(\|\nabla d\|_2^2\|\nabla^2d\|_2^2+\|\nabla d_0\|_2^2\|\nabla^2d\|_2^2+\|\nabla^2d_0\|_2^2\|\nabla d\|_2^2+\|\nabla d_0\|_2\|\nabla^2d_0\|_2+\|\nabla d_0\|_4^4)\nonumber\\
\leq&C(e_0+e_1+e_0\|\nabla^2d\|_2^2),\label{1.16-5}
\end{align}
which, substituted into (\ref{1.16-3}), gives
\begin{align*}
E_1(t)\leq&e_1(1+t)+e_0\int_0^t(E_1(s)+e_0+e_1+e_0E_1(s))(E_1(s)+e_1)ds\\
\leq&e_1(1+t)+e_0(e_0+e_1)\int_0^t(E_1(s)+e_1)^2ds,
\end{align*}
and thus
$$
E_1(t)+e_1\leq e_1(1+t)+e_0(e_0+e_1)\int_0^t(E_1(s)+e_1)^2ds.
$$
Noticing that $E_1(t)$ is increasing, the above inequality implies
\begin{equation}\label{1.16-6}
E_1(t)+e_1\leq e_1(1+t)+e_0(e_0+e_1)(E_1(t)+e_1)^2t.
\end{equation}

Differentiate (\ref{1.2}) with respect to $t$ and using (\ref{1.1}), it has
\begin{align*}
\rho(u_{tt}+(u\cdot\nabla)u_t)-\Delta u_t+\nabla p_t=&\textmd{div}(\rho u)(u_t+(u\cdot\nabla)u)-\rho(u_t\cdot\nabla)u\\
&-\textmd{div}(\nabla d_t\odot\nabla d-\textmd{div}(\nabla d\odot\nabla d_t).
\end{align*}
Multiplying the above equation by $u_t$, and integrating over $\Omega$ yields
\begin{align*}
&\frac{d}{dt}\int_\Omega\frac{\rho}{2}|u_t|^2dx+\int_\Omega|\nabla u_t|^2dx\\
\leq&C\int_\Omega(|\nabla u_t||\nabla d||\nabla d_t|+\rho|u||u_t||\nabla u_t|+\rho|u|^2|\nabla u||\nabla u_t|+\rho|u|^2|\nabla^2u||u_t|\\
&+\rho|u||\nabla u|^2|u_t|+\rho|\nabla u||u_t|^2)dx\\
\leq&\frac{1}{2}\int_\Omega|\nabla u_t|^2dx+C\int_\Omega(|\nabla d|^2|\nabla d_t|^2+\rho^2|u|^2|u_t|^2+\rho^2|u|^4|\nabla u|^2\\
&+\rho|u|^2|\nabla^2u||u_t|+\rho|u||\nabla u|^2|u_t|+\rho|\nabla u||u_t|^2)dx,
\end{align*}
and thus
\begin{align}
&\sup_{0\leq s\leq t}\|u_t\|_2^2+\int_0^t\|\nabla u_t\|_2^2ds\nonumber\\
\leq&\|\sqrt{\rho_0}u_t(0)\|_2^2+C(\overline\rho)\int_0^t\int_\Omega(|\nabla d|^2|\nabla d_t|^2+\rho^2|u|^2|u_t|^2+\rho^2|u|^4|\nabla u|^2\nonumber\\
&+\rho|u|^2|\nabla^2u||u_t|+\rho|u||\nabla u|^2|u_t|+\rho|\nabla u||u_t|^2)dx.\label{1.15}
\end{align}
By Ladyzhenskaya inequality, Gagliado-Nirenberg inequality and Cauchy inequality, we can estimate the terms on the right hand side of the above inequality as follows
\begin{align*}
J_1=&\int_0^t\int_\Omega|u||\nabla u|^2|u_t|dxds\leq\int_0^t\|u\|_4\|\nabla u\|_4\|u_t\|_4ds\\
\leq&C\int_0^t\|u\|_2^{1/2}\|\nabla u\|_2^{1/2}\|\nabla u\|_2^{1/2}\|\nabla^2u\|_2^{1/2}\|u_t\|_2^{1/2}\|\nabla u_t\|_2^{1/2}ds\\
\leq&C\int_0^t\|u\|_2^{1/2}\|\nabla u\|_2\|\nabla^2 u\|_2^{1/2}\|u_t\|_2^{1/2}\|\nabla u_t\|_2^{1/2}ds\\
\leq&\varepsilon\int_0^t(\|\nabla u_t\|_2^2+\|\nabla^2u\|_2^2)ds+C\int_0^t\|u\|_2\|\nabla u\|_2^2\|u_t\|_2ds,\\
J_2=&\int_0^t\int_\Omega|u|^2|u_t|^2dxds\leq\int_0^t\|u\|_4^2\|u_t\|_4^2ds\leq C\int_0^t\|u\|_2\|\nabla u\|_2\|u_t\|_2\|\nabla u_t\|_2ds\\
\leq&\varepsilon\int_0^t\|\nabla u_t\|_2^2dx+C\int_0^t\|u\|_2^2\|\nabla u\|_2^2\|u_t\|_2^2ds,\\
J_3=&\int_0^t\int_\Omega|u|^2|\nabla^2u||u_t|dxds\leq\int_0^t\|u\|_8^2\|\nabla^2u\|_2\|u_t\|_4ds\\
\leq&C\int_0^t\|u\|_2^{1/2}\|\nabla u\|_2^{3/2}\|\nabla^2u\|_2\|u_t\|_2^{1/2}\|\nabla u_t\|_2^{1/2}ds\\
\leq&\varepsilon\int_0^t(\|\nabla u_t\|_2^2+\|\nabla^2u\|_2^2)ds+C\int_0^t\|u\|_2^2\|\nabla u\|_2^6\|u_t\|_2^2ds,\\
J_4=&\int_0^t\int_\Omega|u|^4|\nabla u|^2dxds\leq\int_0^t\|u\|_8^4\|\nabla u\|_4^2ds\\
\leq&C\int_0^t\|u\|_2\|\nabla u\|_2^3\|\nabla u\|_2\|\nabla^2u\|_2ds\leq\varepsilon\int_0^t\|\nabla^2u\|_2^2ds+C\int_0^t\|u\|_2^2\|\nabla u\|_2^8ds,\\
J_5=&\int_0^t\int_\Omega|\nabla u||u_t|^2dxde\leq\int_0^t\|\nabla u\|_2\|u_t\|_4^2ds\\
\leq&C\int_0^t\|\nabla u\|_2\|u_t\|_2\|\nabla u_t\|_2ds\leq\varepsilon\int_0^t\|\nabla u_t\|_2^2ds+C\int_0^t\|\nabla u\|_2^2\|u_t\|_2^2ds.
\end{align*}
Substituting the above inequalities into (\ref{1.15}), it follows that
\begin{align}
&\sup_{0\leq s\leq t}\|u_t\|_2^2+\int_0^t\|\nabla u_t\|_2^2ds\nonumber\\
\leq&C\|\sqrt{\rho_0}u_t(0)\|_2^2+C(\overline\rho)\int_0^t(\|u\|_2\|\nabla u\|_2^2\|u_t\|_2+\|u\|_2^2\|\nabla u\|_2^2\|u_t\|_2^2+\|u\|_2^2\|\nabla u\|_2^6\|u_t\|_2^2\nonumber\\
&+\|u\|_2^2\|\nabla u\|_2^8+\|\nabla u\|_2^2\|u_t\|_2^2)ds+C(\overline\rho)\int_0^t\int_\Omega|\nabla d|^2|\nabla d_t|^2dxds.\label{1.16}
\end{align}
Differentiate equation (\ref{1.4}) with respect to $t$, there holds
$$
d_{tt}-\Delta d_t=|\nabla d|^2d_t+2\nabla d:\nabla d_t d-(u\cdot\nabla)d_t-(u_t\cdot\nabla )d.
$$
Taking square power to both sides of the above equation and integrating over $\Omega$, we get
\begin{align*}
&\frac{d}{dt}\int_\Omega|\nabla d_t|^2dx+\int_\Omega|\Delta d_t|^2dx\\
\leq&C\int_\Omega(|\nabla d|^4|d_t|^2+|\nabla d|^2|\nabla d_t|^2+|u|^2|\nabla d_t|^2+|u_t|^2|\nabla d|^2)dx,
\end{align*}
and thus
\begin{align}
&\sup_{0\leq s\leq t}\|\nabla d_t\|_2^2+\int_0^t\|\Delta d_t\|_2^2ds\nonumber\\
\leq&\|\nabla d_t(0)\|_2^2+C\int_0^t\int_\Omega(|\nabla d|^4|d_t|^2+|\nabla d|^2|\nabla d_t|^2+|u|^2|\nabla d_t|^2+|u_t|^2|\nabla d|^2)dxds.\label{1.17}
\end{align}
By Ladyzhenskaya inequality, Gagliado-Nirenberg inequality and Cauchy inequality, we have the following estimates
\begin{align*}
K_1=&\int_0^t\int_\Omega|\nabla d|^4|d_t|^2dxds\leq\int_0^t\|\nabla d\|_8^4\|d_t\|_4^2ds\\
\leq& C\int_0^t(\|\nabla(d-d_0)\|_8^4+\|\nabla d_0\|_8^4)\|d_t\|_2\|\nabla d_t\|_2ds\nonumber\\
\leq&C\int_0^t(\|\nabla(d-d_0)\|_2\|\nabla^2(d-d_0)\|_2^3+\|\nabla d_0\|_8^4)\|d_t\|_2\|\nabla d_t\|_2ds\nonumber\\
\leq&C\int_0^t[(\|\nabla d\|_2+\|\nabla d_0\|_2)(\|\nabla^2d\|_2^3+\|\nabla^2d_0\|_2^3)+\|\nabla d_0\|_8^4]\|d_t\|_2\|\nabla d_t\|_2ds\nonumber\\
\leq&C\int_0^t(\|\nabla d_0\|_2^4+\|\nabla^2d_0\|_2^4+\|\nabla d_0\|_8^4+\|\nabla d\|_2^4+\|\nabla^2d\|_2^4)\|d_t\|_2\|\nabla d_t\|_2ds,\\
K_2=&\int_0^t\int_\Omega|\nabla d|^2|\nabla d_t|^2dxds\leq\int_0^t\int_\Omega\|\nabla d\|_4^2\|\nabla d_t\|_4^2ds\nonumber\\
\leq&C\int_0^t\|\nabla d\|_4^2\|\nabla d_t\|_2\|\Delta d_t\|_2ds\leq\varepsilon\int_0^t\|\Delta d_t\|_2^2ds+C\int_0^t\|\nabla d\|_4^4\|\nabla d_t\|_2^2ds,\\
K_3=&\int_0^t\int_\Omega|u|^2|\nabla d_t|^2dxds\leq\int_0^t\|u\|_4^2\|\nabla d_t\|_4^2ds\leq C\int_0^t\|u\|_2\|\nabla u\|_2\|\nabla d_t\|_2\|\Delta d_t\|_2ds\nonumber\\
\leq&\varepsilon\int_0^t\|\Delta d_t\|_2^2ds+C\int_0^t\|u\|_2^2\|\nabla u\|_2^2\|\nabla d_t\|_2^2ds,\\
K_4=&\int_0^t\int_\Omega|\nabla d|^2|u_t|^2dxds\leq\int_0^t\|\nabla d\|_4^2\|u_t\|_4^2ds\nonumber\\
\leq&C\int_0^t\|\nabla d\|_4^2\|u_t\|_2\|\nabla u_t\|_2ds\leq\varepsilon\int_0^t\|\nabla u_t\|_2^2ds+C\int_0^t\|\nabla d\|_4^4\|u_t\|_2^2ds.\nonumber
\end{align*}
Substituting these inequalities into (\ref{1.17}) yields
\begin{align}
&\sup_{0\leq s\leq t}\|\nabla d_t\|_2^2+\int_0^t\|\nabla^2d_t\|_2^2ds\nonumber\\
\leq&C\|\nabla d_t(0)\|_2^2+C(\|\nabla d_0\|_2^4+\|\nabla^2d_0\|_2^4+\|\nabla d_0\|_8^4+1)\int_0^t[(\|\nabla d\|_2^4+\|\nabla^2d\|_2^4)\nonumber\\
&\times\|d_t\|_2\|\nabla d_t\|_2+\|\nabla d\|_4^4(\|u_t\|_2^2+\|\nabla d_t\|_2^2)\nonumber\\
&+\|u\|_2^2\|\nabla u\|_2^2\|\nabla d_t\|_2^2]ds+\varepsilon\int_0^t\|\nabla u_t\|_2^2ds.\label{1.19}
\end{align}
Combining (\ref{1.16}) with the inequality of $K_2$, together with (\ref{1.19}), we get
\begin{align}
&\sup_{0\leq s\leq t}(\|u_t\|_2^2+\|\nabla d_t\|_2^2)+\int_0^t(\|\nabla u_t\|_2^2+\|\nabla^2d_t\|_2^2)ds\nonumber\\
\leq&C(\|u_t(0\|_2^2+\|\nabla d_t(0)\|_2^2)+C(\overline\rho)(\|\nabla d_0\|_2^4+\|\nabla^2d_0\|_2^4+\|\nabla d_0\|_8^4+1)\nonumber\\
&\times\int_0^t[(1+\|u\|_2^2)\|\nabla u\|_2^2\|u_t\|_2^2+\|\nabla u\|_2^2+\|u\|_2^2\|\nabla u\|_2^6\|u_t\|_2^2+\|u\|_2^2\|\nabla u\|_2^8\nonumber\\
&+(\|\nabla d\|_2^4+\|\nabla^2d\|_2^4)\|d_t\|_2\|\nabla d_t\|_2+\|\nabla d\|_4^4(\|u_t\|_2^2+\|\nabla d_t\|_2^2)\nonumber\\
&+\|u\|_2^2\|\nabla u\|_2^2\|\nabla d_t\|_2^2]ds.\label{1.20}
\end{align}
Using equations (\ref{1.2}) and (\ref{1.4}), we compute
\begin{align*}
\|u_t(0)\|_2^2\leq& C(\underline\rho)\|(\rho u_t)(0)\|_2^2\leq C(\underline\rho)\|-\Delta d_0\cdot\nabla d_0-\nabla p_0+\Delta u_0-\rho_0(u_0\cdot\nabla )u_0\|_2^2\\
\leq&C(\underline\rho, \overline\rho)(\|\Delta d_0\|_4^2\|\nabla d_0\|_4^2+\|\nabla p_0\|_2^2+\|\nabla^2u_0\|_2^2+\|u_0\|_4^2\|\nabla u_0\|_4^2)\\
\leq&C(\overline\rho, \underline\rho)(\|\nabla d_0\|_4^4+\|\nabla^2d_0\|_4^4+\|\nabla p_0\|_2^2+\|\nabla^2u_0\|_2^2+\|u_0\|_4^4+\|\nabla u_0\|_4^4)
\end{align*}
and
\begin{align*}
\|\nabla d_t(0)\|_2^2\leq&C(\|\nabla^3d_0\|_2^2+\|\nabla d_0\|_6^6+\|\nabla d_0\|_4^2\|\nabla^2d_0\|_4^2+\|u_0\|_4^2\|\nabla^2d_0\|_4^2+\|\nabla u_0\|_4^2\|\nabla d_0\|_4^2\\
\leq&C(\|\nabla^3d_0\|_2^2+\|\nabla d_0\|_6^6+\|\nabla d_0\|_4^4+\|\nabla^2d_0\|_4^4+\|u_0\|_4^4+\|\nabla u_0\|_4^4).
\end{align*}
Set
\begin{align*}
e_2=&C(\overline\rho, \underline\rho)(1+\|u_0\|_4^4+\|\nabla u_0\|_4^4+\|\nabla^2u_0\|_2^2+\|\nabla p_0\|_2^2\\
&+\|\nabla d_0\|_4^4+\|\nabla d_0\|_8^8+\|\nabla^2d_0\|_4^4+\|\nabla^3d_0\|_2^2),
\end{align*}
and
$$
E_2(t)=\sup_{0\leq s\leq t}(\|u_t\|_2^2+\|\nabla d_t\|_2^2)+\int_0^t(\|\nabla u_t\|_2^2+\|\nabla^2d_t\|_2^2)ds.
$$
Then, it follows from (\ref{1.6}), (\ref{1.16-5}) and (\ref{1.20}) that
\begin{align*}
E_2(t)\leq&e_2+e_2\int_0^t[e_0E_1(s)E_2(s)+e_0E_1^3(s)E_2(s)+e_0E_1^4(s)+(e_0^2+E_1^2(s))E_1(s)^{1/2}E_2^{1/2}(s)\nonumber\\
&+(e_0+e_1)(1+E_1(s))E_2(s)+e_0E_1(s)E_2(s)]ds\nonumber\\
\leq&e_2+e_2\int_0^t[(e_0E_1(s)+e_0E_1^3(s)+1+e_0+e_1+(e_0+e_1)E_1(s)+e_0E_1(s))E_2(s)\\
&+e_0E_1^4(s)+(e_0^2+E_1^2(s))^2E_1(s)]ds,\\
\leq&e_2+e_2\int_0^t[(e_0+e_1)(1+E_1^3(s))E_2(s)+(e_0+e_1)^4(1+E_1^5(s))]ds\\
\leq&e_2+(e_0+e_1+e_2)^5\int_0^t(1+E_1^5(s))(1+E_2(s))ds.
\end{align*}
Noticing that $E_2(t)$ is nondecreasing, it follows from the above inequality that
\begin{equation}\label{1.21}
E_2(t)\leq e_2+(e_0+e_1+e_2)^5(1+E_1^5(t))(1+E_2(t))t.
\end{equation}
Set $e=e_0+e_1+e_2$, and $E(t)=E_1(t)+E_2(t)+e$, then it follows from (\ref{1.16-6}) and (\ref{1.21}) that
$$
E(t)\leq e(1+t)+e^5 E^6(t)t,
$$
from which, using continuity argument, one can easily show that
\begin{equation}\label{1.22}
E(t)\leq C(e),\quad\mbox{ for }t\in(0, t_e),
\end{equation}
for some positive constant $t_e$ depending only on $e$.

By Ladyzhenskaya inequality and Gagliado-Nirenberg inequality, and applying elliptic estimates to Stokes equations, we obtain
\allowdisplaybreaks\begin{align}
&\|\nabla^2u\|_2^2+\|\nabla p\|_2^2\leq C\|\rho(u_t+(u\cdot\nabla)u)\|_2^2+\|\nabla^2 d|\nabla d|\|_2^2\nonumber\\
\leq&C(\overline\rho)(\|u_t\|_2^2+\||u||\nabla u|\|_2^2+\||\nabla^2 d||\nabla d|\|_2^2)\nonumber\\
\leq&C(\overline\rho)(\|u_t\|_2^2+\|u\|_4^2\|\nabla u\|_4^2+\|\nabla^2 d\|_4^4\|\nabla d\|_4^2)\nonumber\\
\leq&C(\overline\rho)(\|u_t\|_2^2+\|u\|_2\|\nabla u\|_2^2\|\nabla^2u\|_2+\|\nabla^2 d\|_4^2\|\nabla d\|_4^2)\nonumber\\
\leq&\varepsilon\|\nabla^2u\|_2^2+\varepsilon(\|\nabla^2(d-d_0)\|_4^4+\|\nabla^2 d_0\|_4^4)+C(\overline\rho)(\|u_t\|_2^2+\|u\|_2^2\|\nabla u\|_2^4\nonumber\\
&+\|\nabla(d-d_0)\|_4^4+\|\nabla d_0\|_4^4)\nonumber\\
\leq&\varepsilon\|\nabla^2u\|_2^2+\varepsilon(C\|\nabla^2(d-d_0)\|_2^2\|\nabla^3(d-d_0)\|_2^2+\|\nabla^2 d_0\|_4^4)\nonumber\\
&+C(\overline\rho)(\|u_t\|_2^2+\|u\|_2^2\|\nabla u\|_2^4+\|\nabla(d-d_0)\|_2^2\|\nabla^2(d-d_0)\|_2^2+\|\nabla d_0\|_4^4)\nonumber\\
\leq&\varepsilon\|\nabla^2u\|_2^2+\varepsilon[C(e)(\|\nabla^3d\|_2^2+1)+C(\overline\rho)(\|u_t\|_2^2+C(e))]\nonumber\\
\leq&\varepsilon C(e)(\|\nabla^2u\|_2^2+\|\nabla^3d\|_2^2+\|u_t\|_2^2)+C(e), \label{1.23}
\end{align}
and
\begin{align}
&\|\nabla^2u\|_q^2+\|\nabla p\|_q^2\leq C\|\rho(u_t+(u\cdot\nabla)u)\|_q^2+\|\nabla^2 d|\nabla d|\|_q^2\nonumber\\
\leq&C(\|u_t\|_q^2+\||u||\nabla u|\|_q^2+\||\nabla^2 d||\nabla d|\|_q^2)\nonumber\\
\leq&C(\|u_t\|_2^{2(1-\theta)}\|\nabla u_t\|_2^{2\theta}+\|u\|_{2q}^2\|\nabla u\|_{2q}^2+\|\nabla^2 d\|_{2q}^2\|\nabla d\|_{2q}^2)\nonumber\\
\leq&C[\|u_t\|_2^{2(1-\theta)}\|\nabla u_t\|_2^{2\theta}+\|u\|_2^{2(1-\delta)}\|\nabla u\|_2^2\|\nabla^2u\|_2^{2\delta}\nonumber\\
&+(\|\nabla(d-d_0)\|_2^{2(1-\delta)}\|\nabla^2(d-d_0)\|_2^{2\delta}+\|\nabla d_0\|_{2q}^2)\nonumber\\
&\times(\|\nabla^2(d-d_0)\|_2^{2(1-\delta)}\|\nabla^3(d-d_0)\|_2^{2\delta}+\|\nabla^2 d_0\|_{2q}^2)]\nonumber\\
\leq&C(\|u_t\|_2^{2(1-\theta)}\|\nabla u_t\|_2^{2\theta}+\|u\|_2^{2(1-\delta)}\|\nabla u\|_2^2\|\nabla^2u\|_2^{2\delta}+C(e)\|\nabla^3d\|_2^{2\delta}+C(e))\nonumber\\
\leq&C(e)(\|\nabla u_t\|_2^{2\theta}+\|\nabla^3 d\|_2^{2\delta}+1)\leq\varepsilon(\|\nabla u_t\|_2^2+\|\nabla^3 d\|_2^2)+C(e)\label{1.24}
\end{align}
for any $2\leq q<\infty$, where $\theta=1-2/q$ and $\delta=1-1/q$. Using Ladyzhenskaya inequality and applying
elliptic estimates to Laplace equation yields
\begin{align}
&\|\nabla^3d\|_2^2\leq C(\|\nabla^3(d-d_0)\|_2^2+\|\nabla^3d_0\|_2^2)\leq C(\|\Delta(d-d_0)\|_{H^1}^2+\|\nabla^3d_0\|_2^2)\nonumber\\
\leq&C(\|d_t+(u\cdot\nabla)d-|\nabla d|^2d\|_{H^1}^2+\|\nabla^3d_0\|_2^2)\nonumber\\
\leq&C(\|\nabla d_t\|_2^2+\||\nabla u||\nabla d|\|_2^2+\||u||\nabla^2d|\|_2^2+\||\nabla d|^2\|_2^2+\||\nabla d||\nabla^2d|\|_2^2\nonumber\\
&+\|d_t\|_2^2+\||u|\nabla d\|_2^2+\||\nabla d|\|_2^2+\|\nabla^3d_0\|_2^2) \nonumber\\
\leq&C[\|\nabla d_t\|_2^2+\|\nabla u\|_2^2\|\nabla^2u\|_2^2(\|\nabla(d-d_0)\|_2\|\nabla^2(d-d_0)\|_2^2+\|\nabla d_0\|_4^4)\nonumber\\
&+\|u\|_2\|\nabla u\|_2(\|\nabla^2(d-d_0)\|_2\|\nabla^3(d-d_0)\|_2+\|\nabla^2d_0\|_4^4)\nonumber\\
&+\|\nabla(d-d_0)\|_2^2\|\nabla^2(d-d_0)\|_2^4+\|\nabla d_0\|_6^6+(\|\nabla(d-d_0)\|_2\|\nabla^2(d-d_0)\|_2\nonumber\\
&+\|\nabla d_0\|_4^2)(\|\nabla^2(d-d_0)\|_2\|\nabla^3(d-d_0)\|_2+\|\nabla^2d_0\|_4^2)\nonumber\\
&+\|d_t\|_2^2+\|u\|_2\|\nabla u\|_2(\|\nabla(d-d_0)\|_2+\|\nabla d_0\|_2)\nonumber\\
&\times(\|\nabla^2(d-d_0)\|_2+\|\nabla^2d_0\|_2)+\|\nabla(d-d_0)\|_2\nonumber\\
&+\|\nabla(d-d_0)\|_2\|\nabla^2(d-d_0)\|_2+\|\nabla^3d_0\|_2^2]\leq C(\|\nabla d_t\|_2^2+C(e)).\label{1.25}
\end{align}
Combining (\ref{1.22})--(\ref{1.25}), by taking $\varepsilon$ small enough, there holds
\begin{align*}
&\sup_{0\leq t\leq t_e}(\|u\|_{H^2}^2+\|\nabla d\|_{H^2}^2+\|\nabla p\|_2^2)+\int_0^{t_e}(\|\nabla^3u\|_q^2+\|\nabla p\|_q^2+\|\nabla d\|_{H^2}^2\\
&+\|d_t\|_{H^2}^2+\|d_{tt}\|_2^2)dt\leq C(e),\qquad q\in[2,\infty).
\end{align*}
Taking the operator $\nabla$ on both sides of equation (\ref{1.1}), multiplying the resulting equation by $2\nabla\rho$ and integrating over $\Omega$ yields
$$
\frac{d}{dt}\|\nabla\rho\|_2^2\leq 2\|\nabla u\|_\infty\|\nabla\rho\|_2^2,
$$
and thus
\begin{align*}
\|\nabla\rho\|_2^2\leq&e^{\int_0^t\|\nabla u\|_\infty ds}\|\nabla\rho_0\|_2^2\leq e^{C\int_0^t\|\nabla u\|_2^{(q-2)/(2q-2)}\|\nabla^2u\|_q^{q/(2q-2)}dt}\|\nabla\rho_0\|_2^2\\
\leq&e^{C\int_0^t(\|\nabla u\|_2+\|\nabla^2\|_q)dt}\|\nabla\rho_0\|_2\leq e^{C(e)}\|\nabla\rho_0\|_2^2.
\end{align*}
By the aid of this estimate, it follows from equation (\ref{1.1}) that
\begin{align*}
\|\rho_t\|_2^2\leq&\|u\nabla\rho\|_2^2\leq C\|u\|_\infty^2\|\nabla\rho\|_2^2\leq\|u\|_2\|\nabla^2u\|_2\|\nabla\rho\|_2^2\leq C(e)\|\nabla\rho_0\|_2^2.
\end{align*}
The proof is complete.
\end{proof}

\section{Local and global strong solutions of Cauchy problem}\label{sec3}

In this section, we prove the global existence and uniqueness of strong solutions to the Cauchy problem of the system (\ref{1.1})--(\ref{1.5}), i.e. we will give the proof of Theorem \ref{thm1.1}. Our strategy is firstly proving the local existence and uniqueness of strong solutions and then extending the local solution to be a global one.

We need the following compactness lemma.

\begin{lemma}\label{lem3.1}
(See Simon \cite{Simon} Corollary 4) Assume that $X, B$ and $Y$ are three Banach spaces, with $X\hookrightarrow\hookrightarrow B\hookrightarrow Y.$ Then the following hold true

(i) If $F$ is a bounded subset of $L^p(0, T; X)$ where $1\leq p<\infty$, and $\frac{\partial F}{\partial t}=\left\{\frac{\partial f}{\partial t}|f\in F\right\}$ is bounded in $L^1(0, T; Y)$. Then $F$ is relatively compact in $L^p(0, T; B)$;

(ii) If $F$ is bounded in $L^\infty(0, T; X)$ and $\frac{\partial F}{\partial t}$ is bounded in $L^r(0, T; Y)$ where $r>1$. Then $F$ is relatively compact in $C([0, T]; B)$.
\end{lemma}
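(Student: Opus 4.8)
This is the Aubin--Lions--Simon compactness lemma, and I would prove it by the classical route: an Ehrling-type interpolation inequality fed into a vector-valued Kolmogorov--Riesz compactness criterion for $L^p(0,T;B)$ in part (i), and into the Arzela--Ascoli theorem for $C([0,T];B)$ in part (ii). Everything rests on two structural facts — the interpolation inequality, and the elementary remark that a set bounded in $L^p(0,T;X)$ has all its time-averages bounded in $X$.

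First I would establish the interpolation inequality: for every $\eta>0$ there is $C_\eta>0$ with $\|v\|_B\le\eta\|v\|_X+C_\eta\|v\|_Y$ for all $v\in X$. This follows by contradiction from $X\hookrightarrow\hookrightarrow B\hookrightarrow Y$: if it failed for a fixed $\eta$, there would be $v_n$ with $\|v_n\|_B=1$, $\|v_n\|_X<1/\eta$ and $\|v_n\|_Y<1/n$; by the compact embedding a subsequence converges in $B$ to some $v$ with $\|v\|_B=1$, while $v_n\to0$ in $Y$ forces $v=0$, a contradiction. I would also record that for $f\in L^p(0,T;X)$ each average satisfies $\|\int_{t_1}^{t_2}f(t)\,dt\|_X\le(t_2-t_1)^{1-1/p}\|f\|_{L^p(0,T;X)}$, so if $F$ is bounded in $L^p(0,T;X)$ the averages $\{\int_{t_1}^{t_2}f\,dt:f\in F\}$ are bounded in $X$, hence relatively compact in $B$.

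For part (i) I would invoke the criterion (Simon): a bounded subset $F$ of $L^p(0,T;B)$ is relatively compact provided (a) for all $0<t_1<t_2<T$ the averages $\{\int_{t_1}^{t_2}f\,dt:f\in F\}$ are relatively compact in $B$, and (b) $\sup_{f\in F}\|f(\cdot+h)-f(\cdot)\|_{L^p(0,T-h;B)}\to0$ as $h\downarrow0$. Condition (a) is the remark above. For (b), from $f(t+h)-f(t)=\int_t^{t+h}f'(s)\,ds$ I get $\|f(\cdot+h)-f(\cdot)\|_{L^1(0,T-h;Y)}\le h\|f'\|_{L^1(0,T;Y)}$, which is $O(h)$ uniformly in $F$. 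Since moreover $f\in W^{1,1}(0,T;Y)\hookrightarrow C([0,T];Y)$ with $\|f\|_{C([0,T];Y)}\le\frac1T\int_0^T\|f(s)\|_Y\,ds+\|f'\|_{L^1(0,T;Y)}$, the first term being controlled through $X\hookrightarrow Y$ by $\|f\|_{L^p(0,T;X)}$, the family $F$ is bounded in $L^\infty(0,T;Y)$; the interpolation $\|g\|_{L^p}\le\|g\|_{L^1}^{1/p}\|g\|_{L^\infty}^{1-1/p}$ then gives $\|f(\cdot+h)-f(\cdot)\|_{L^p(0,T-h;Y)}\le Ch^{1/p}$ uniformly. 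Feeding this, together with the uniform bound $\|f(\cdot+h)-f(\cdot)\|_{L^p(0,T-h;X)}\le2\|f\|_{L^p(0,T;X)}$, into the interpolation inequality yields $\|f(\cdot+h)-f(\cdot)\|_{L^p(0,T-h;B)}\le C\eta+C_\eta h^{1/p}$, and choosing first $\eta$ then $h$ small proves (b).

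For part (ii), $f\in W^{1,r}(0,T;Y)\hookrightarrow C^{0,1-1/r}([0,T];Y)$ gives $\|f(t)-f(s)\|_Y\le\|f'\|_{L^r(0,T;Y)}|t-s|^{1-1/r}$, uniformly in $F$; combined with the $L^\infty(0,T;X)$-bound and the interpolation inequality ($\|f(t)-f(s)\|_B\le2\eta\|f\|_{L^\infty(0,T;X)}+C_\eta\|f(t)-f(s)\|_Y$) this shows each $f$ agrees a.e. with a function in $C([0,T];B)$ and that $F$ is uniformly equicontinuous there, while $\{f(t):f\in F\}$ is bounded in $X$, hence relatively compact in $B$, for every $t$; Arzela--Ascoli then concludes. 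The routine parts are the two interpolation inequalities; the point needing care in part (i) is that the time derivative is only in $L^1(0,T;Y)$, so equismallness of time-translations cannot be obtained directly in $L^p(0,T-h;Y)$ but must be extracted by interpolating the $O(h)$ bound in $L^1(0,T-h;Y)$ against an $L^\infty$-in-time bound — which is what forces the detour through $W^{1,1}(0,T;Y)\hookrightarrow C([0,T];Y)$. If a fully self-contained proof is wanted, the real obstacle is the abstract criterion (a)--(b) itself: I would prove it by averaging in time (the averages map $F$ into a precompact family by (a)), controlling the averaging error uniformly via the time-translation smallness in (b), and concluding by a total-boundedness argument — or, as in the statement, simply cite it from \cite{Simon}.
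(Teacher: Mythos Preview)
Your proof proposal is correct and follows the standard route to the Aubin--Lions--Simon lemma. The paper, however, does not prove this statement at all: it is stated as Lemma~\ref{lem3.1} with the attribution ``(See Simon \cite{Simon} Corollary 4)'' and is used as a black-box compactness tool in the proofs of Proposition~\ref{prop3.1} and Theorem~\ref{thm1.2}. So there is no proof in the paper to compare against; you have supplied what the paper deliberately outsources.

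Your argument is sound. The Ehrling inequality, the relative compactness of time-averages, the Kolmogorov--Riesz criterion for part (i), and the Arzel\`a--Ascoli argument for part (ii) are exactly the ingredients in Simon's original proof. The one place worth a word of caution is in part (ii), where you assert that $\{f(t):f\in F\}$ is bounded in $X$ for every $t$: strictly speaking the $L^\infty(0,T;X)$ bound gives this only for a.e.\ $t$, and one should combine it with the $C([0,T];B)$-representative and the uniform equicontinuity in $B$ to conclude pointwise relative compactness at \emph{every} $t$ (e.g.\ via a sequence $t_n\to t$ of ``good'' times and the fact that the closure in $B$ of a relatively compact set is compact). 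This is routine, and your sketch already contains the pieces needed to fill it in. Since the paper itself simply cites \cite{Simon}, your level of detail already exceeds what is required here.
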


Our local existence and uniqueness result is stated and proven in the following proposition.

\begin{proposition}\label{prop3.1}(Local strong solution) Assume that $0<\underline\rho\leq\rho_0\leq\overline\rho<\infty$, $\nabla\rho_0\in L^2(\mathbb R^2)$, $u_0\in H^2(\mathbb R^2)$ with $\textmd{div}u_0=0$, $\nabla d_0\in H^2(\mathbb R^2)$ with $|d_0|=1$.

Then there exists a time $T$ depending only on $\underline\rho$, $\overline\rho$, $\|u_0\|_{H^2}^2$ and $\|\nabla d_0\|_{H^2}$, such that the system (\ref{1.1})--(\ref{1.5}) complemented with the initial data $(\rho, u, d)|_{t=0}=(\rho_0, u_0, d_0)$ has a unique strong solution $(\rho, u, d, p)$ in $Q_T=\mathbb R^2\times(0, T)$, satisfying
\begin{eqnarray*}
&&\rho\in L^\infty(Q_T),\quad \nabla\rho, \rho_t\in L^\infty(0, T; L^2),\\
&&u\in L^\infty(0, T; H^2)\cap L^2(0, T; W^{2, q}),\quad u_t\in L^\infty(0, T; L^2)\cap L^2(0, T; H^1),\\
&&\nabla d\in L^\infty(0, T; H^2)\cap L^2(0, T; W^{2, q}),\quad d_t\in L^\infty(0, T; H^1)\cap L^2(0, T; H^2),\quad d_{tt}\in L^2(Q_T)\\
&&\nabla p\in L^\infty(0, T; L^2)\cap L^2(0, T; L^q)
\end{eqnarray*}
for $q\in[2,\infty)$.
\end{proposition}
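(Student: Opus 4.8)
The plan is to prove the local existence by the standard domain expanding method, using the uniform estimates from Lemma \ref{lem1.2}, and to establish uniqueness by a direct energy argument on the difference of two solutions. For existence, first I would, for each $R \geq 1$, regularize the initial data: take $(\rho_0^R, u_0^R, d_0^R)$ supported in (or defined on) $B_R(0)$, obtained by cutting off and mollifying $(\rho_0, u_0, d_0)$, arranged so that $\underline\rho \leq \rho_0^R \leq \overline\rho$, $\operatorname{div} u_0^R = 0$, $|d_0^R| = 1$ on $\overline{B_R(0)}$, the compatibility condition of Lemma \ref{lem1.1} holds with some $(\rho_0^R, g_0^R) \in H^1 \times L^2$, and all the norms entering $e_0, e_1, e_2$ converge to (or are bounded by a constant multiple of) the corresponding norms of $(\rho_0, u_0, d_0)$ as $R \to \infty$. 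The subtle point here is preserving $|d_0^R| = 1$ while mollifying; this is handled by mollifying $d_0$ and then renormalizing $d_0^R := \tilde d_0^R / |\tilde d_0^R|$, which is legitimate for large $R$ since $\tilde d_0^R$ is close to a unit vector field. Lemma \ref{lem1.1} then furnishes, for each $R$, a local strong solution $(\rho^R, u^R, d^R, p^R)$ on $B_R(0) \times (0, T_R)$.

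Next I would invoke Lemma \ref{lem1.2}: since the constants $e_0, e_1, e_2$ are controlled uniformly in $R$ by the norms of the original data, there is a time $t_e > 0$, depending only on $\underline\rho, \overline\rho, \|u_0\|_{H^2}, \|\nabla d_0\|_{H^2}$, such that $T_R \geq t_e$ for all large $R$, and on $(0, t_e)$ the solutions $(\rho^R, u^R, d^R, p^R)$ satisfy the full set of bounds stated at the end of Lemma \ref{lem1.2}, uniformly in $R$ (extending $u^R, d^R - d_0^R, p^R$ by zero outside $B_R(0)$, or rather working with $B_R(0)$ and noting the bounds are $R$-independent). Set $T = t_e$. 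These uniform bounds give weak-$*$ limits along a subsequence $R \to \infty$: $\rho^R \rightharpoonup \rho$ weak-$*$ in $L^\infty(Q_T)$ with $\nabla \rho, \rho_t$ weak-$*$ in $L^\infty(0,T;L^2)$; $u^R \rightharpoonup u$ weak-$*$ in $L^\infty(0,T;H^2) \cap L^2(0,T;W^{2,q})$ with $u_t^R$ in $L^\infty(0,T;L^2)\cap L^2(0,T;H^1)$; similarly for $d^R, p^R$. To pass to the limit in the nonlinear terms $\rho^R u^R$, $(u^R\cdot\nabla)u^R$, $\nabla d^R \odot \nabla d^R$, $|\nabla d^R|^2 d^R$, $(u^R\cdot\nabla)d^R$, I would apply Lemma \ref{lem3.1} (Aubin--Simon): on any fixed ball $B_N(0)$, $u^R$ is bounded in $L^\infty(0,T;H^2(B_N))$ with $u_t^R$ bounded in $L^2(0,T;L^2(B_N))$, so $u^R \to u$ strongly in $C([0,T];H^1(B_N))$, hence in $L^4(Q_T \cap (B_N\times(0,T)))$-type spaces; likewise $\nabla d^R \to \nabla d$ strongly in $C([0,T];H^1(B_N))$ and $d^R \to d$ strongly, and $\rho^R \to \rho$ strongly in $C([0,T];L^2_{\mathrm{loc}})$ using that $\rho_t^R$ is bounded in $L^\infty(0,T;L^2)$. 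A diagonal argument over $N \to \infty$ gives strong local convergence sufficient to identify every nonlinear term and to pass to the limit in the equations \eqref{1.1}--\eqref{1.4}; the constraints $\operatorname{div} u = 0$ and $|d| = 1$ pass to the limit because they hold for each $R$ and the convergence is strong enough. The regularity claimed for the limit follows from weak lower semicontinuity of the norms, and attainment of the initial data follows from the $C([0,T];\cdot)$ convergence together with the convergence of the regularized initial data.

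For uniqueness, suppose $(\rho_1, u_1, d_1, p_1)$ and $(\rho_2, u_2, d_2, p_2)$ are two strong solutions on $Q_T$ with the same data, and set $\rho = \rho_1 - \rho_2$, $u = u_1 - u_2$, $d = d_1 - d_2$, $p = p_1 - p_2$. I would derive the equations for the differences, test the momentum difference equation with $u$, the transport difference for $\rho$ with $\rho$ (or estimate $\rho$ in $L^2$ via the transport structure), and the director difference equation with $-\Delta d$ or with $d_t$, and then close a Gronwall inequality for $\Phi(t) := \|\sqrt{\rho_1} u(t)\|_2^2 + \|\rho(t)\|_2^2 + \|\nabla d(t)\|_2^2$ using the regularity $u_i \in L^\infty(0,T;H^2)$, $\nabla d_i \in L^\infty(0,T;H^2)$, $\rho_i \in L^\infty$ bounded below, which controls all the coefficients appearing. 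Since $\Phi(0) = 0$, Gronwall forces $\Phi \equiv 0$, whence $u_1 = u_2$, $\rho_1 = \rho_2$, $d_1 = d_2$, and then $\nabla p_1 = \nabla p_2$ from the momentum equation. The main obstacle throughout is the compactness/limit-passage step: the nonlinearities $\nabla d \odot \nabla d$ and $|\nabla d|^2 d$ are quadratic in $\nabla d$, so one genuinely needs the strong $C([0,T];H^1_{\mathrm{loc}})$ convergence of $\nabla d^R$ (not merely weak convergence) to identify them, and one must be careful that the Aubin--Simon lemma is applied on bounded subdomains with a diagonal extraction, since no compactness is available globally on $\mathbb R^2$; a secondary technical point is the construction of admissible approximate initial data respecting $|d_0^R|=1$ and the compatibility condition simultaneously.
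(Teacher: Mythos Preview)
Your proposal is correct and follows essentially the same domain-expanding strategy as the paper: uniform estimates on balls via Lemma \ref{lem1.2}, weak limits plus Aubin--Simon compactness (Lemma \ref{lem3.1}) on compact subsets with a diagonal extraction to identify the nonlinear terms, and a standard energy argument on the difference for uniqueness. The paper's execution is slightly leaner in that it approximates only $u_0$ (by divergence-free $u_{0,R_i}\in H_0^1(B_{R_i})\cap H^2(B_{R_i})$) and uses the original $\rho_0$ and $d_0$ restricted to each ball---taking $p_0=0$, the compatibility condition is automatic since $\rho_0\geq\underline\rho>0$---thereby sidestepping the mollify-and-renormalize step for $d_0$ that you flag as delicate.
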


\begin{proof}
Since $u_0\in H^2$, $\textmd{div}u_0=0$, there is a sequence $\{u_{0, R_i}\}_{i=1}^\infty$, with $R_i\uparrow\infty$ and $u_{0, R_i}\in H_0^1(B_{R_i})\cap H^2(B_{R_i})$ and $\textmd{div}u_{0, R_i}=0$, such that $u_{0, R_i}\rightarrow u_0$ in $H^2$. Since $\rho_0\geq\underline\rho>0$, the compatible condition in Lemma \ref{lem1.1} holds true with $p_0=0$. Let $e$ be the constants stated in Lemma \ref{lem1.2} with $u_0$ replaced by $u_{0, R_i}$. Recalling that $u_{0, R_i}\rightarrow u_0$ in $H^2$, then one can easily estimate $e\leq C(\underline\rho, \overline\rho)(1+\|u_0\|_{H^2}+\|\nabla d_0\|_{H^2})^8$.

By Lemma \ref{lem1.1} and Lemma \ref{lem1.2}, there is a time $T$ depending only on $\underline\rho$, $\overline\rho$, $\|u_0\|_{H^2}$, $\|\nabla d_0\|_{H^2}$, such that the system (\ref{1.1})--(\ref{1.5}) has a unique strong solution $(\rho_i, u_i, d_i, p_i)$ in $B_{R_i}(0)\times(0, T)$, complemented with the initial data $(\rho_0, u_{0, R_i}, d_0)$ and the boundary value data $(u, d)|_{\partial\Omega}=(0, d_0)$, and there hold
\begin{align*}
&\sup_{0\leq t\leq T}(\|u_i\|_{H^2(B_{R_i})}^2+\|\nabla d_i\|_{H^2(B_{R_i})}^2+\|\nabla\rho_i\|_{L^2(B_{R_i})}^2+\|\partial_t\rho_i\|_{L^2(B_{R_i})}^2\\
&+\int_0^T(\|\nabla^2u_i\|_{L^q(B_{R_i})}^2+\|\partial_tu_i\|_{H^1(B_{R_i})}^2+\|\nabla p_i\|_{L^q(B_{R_i})}^2+\|\partial_td_i\|_{H^2(B_{R_i})}^2)dt\\
\leq& C(\underline\rho, \overline\rho, \|u_0\|_{H^2}, \|\nabla d_0\|_{H^2}, \|\nabla\rho_0\|_2)
\end{align*}
for $q\in[2,\infty)$. On account of this estimates, using diagonal argument, there is a subsequence, still indexed by $i$, such that
\begin{eqnarray*}
&& u_i\rightarrow u\quad\mbox{ weakly in }L^2(0, T; W^{2,q}(B_R(0))),\quad q\in [2, \infty),\\
&&\partial_tu_i\rightarrow \partial_tu\quad\mbox{ weakly in }L^2(0, T; H^1(B_R(0))),\\
&&d_i\rightarrow d\quad\mbox{ weakly in }L^2(0, T; H^3(B_R(0))),\\
&&\partial_td_i\rightarrow\partial_td\quad\mbox{ weakly in }L^2(0, T; H^2(B_R(0))),\\
&&\rho_i\rightarrow\rho\quad\mbox{ weakly in }L^2(0,T; H^1(B_R(0))),\\
&&\partial\rho_i\rightarrow\partial_t\rho\quad\mbox{ weakly in }L^2(B_R(0)\times(0, T)),\\
&&p_i\rightarrow p\quad\mbox{ weakly in }L^2(0, T; W^{1,q}(B_R(0))),\quad q\in[2,\infty)
\end{eqnarray*}
for any $R>0$, and the following inequality holds true
\begin{align*}
&\sup_{0\leq t\leq T}(\|u\|_{H^2}^2+\|\nabla d\|_{H^2}^2+\|\nabla\rho\|_{L^2}^2+\|\partial_t\rho\|_{L^2}^2\\
&+\int_0^T(\|\nabla^2u\|_{L^q}^2+\|\partial_tu\|_{H^1}^2+\|\nabla p\|_{L^q}^2+\|\partial_td\|_{H^2}^2)dt\\
\leq& C(\underline\rho, \overline\rho, \|u_0\|_{H^2}, \|\nabla d_0\|_{H^2}, \|\nabla\rho_0\|_2)
\end{align*}
By the Lemma \ref{lem3.1}, there holds
\begin{align*}
&u_i\rightarrow u\mbox{ in }L^2(0, T; H^1(B_R(0))),\\
&d_i\rightarrow d\mbox{ in }L^2(0, T; H^2(B_R(0))),\\
&\rho_i\rightarrow\rho\mbox{ in }L^2(B_R(0)\times(0, T))
\end{align*}
for any $R>0$.

On account of all the above convergence, we can take the limit $i\rightarrow\infty$ to conclude that $(\rho, u, d, p)$ is a strong solution to the system (\ref{1.1})--(\ref{1.5}) complemented with the initial data $(\rho_0, u_0, d_0)$. The uniqueness can be proven in the standard way, thus we omit it here. The proof is complete.
\end{proof}

We also need the following rigidity theorem, which grantees the a priori bound on the $L^4$ space time norm of $\nabla d$.

\begin{lemma}\label{lem3.2}(See \cite{LEI})
Let $\varepsilon_0>0$ and $C_0>0$. There exists a positive constant $\delta_0=\delta_0(\varepsilon_0, C_0)$ such that the following holds:

If $d:\mathbb R^2\rightarrow S^2$, $\nabla d\in H^1(\mathbb R^2)$ with $\|\nabla d\|_{L^2(\mathbb R^2)}\leq C_0$ and $d_3\geq\varepsilon_0$, where $d_3$ is the third component of the vector $d$, then
$$
\|\nabla d\|_{L^4(\mathbb R^2)}^4\leq (1-\delta_0)\|\Delta d\|_{L^2(\mathbb R^2)}^2.
$$

Consequently, for such maps the associated harmonic energy is coercive, i.e.,
$$
\|\Delta d+|\nabla d|^2d\|_{L^2(\mathbb R^2)}^2\geq\frac{\delta_0}{2}(\|\Delta d\|_{L^2(\mathbb R^2)}^2+\|\nabla d\|_{L^4(\mathbb R^2)}^4).
$$
\end{lemma}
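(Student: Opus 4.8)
The plan is to reduce the whole lemma to the single quantitative estimate
$\|\nabla d\|_{L^4(\mathbb R^2)}^4\le C(\varepsilon_0,C_0)\,\|\Delta d+|\nabla d|^2 d\|_{L^2(\mathbb R^2)}^2$
and to prove that estimate by a concentration--compactness contradiction argument. The first preparatory observation is a pointwise orthogonal splitting: since $|d|=1$ gives $d\cdot\Delta d=-|\nabla d|^2$, the tension field $\tau(d):=\Delta d+|\nabla d|^2 d$ is pointwise orthogonal to $d$, while $|\nabla d|^2 d$ is parallel to $d$; hence $|\Delta d|^2=|\nabla d|^4+|\tau(d)|^2$ pointwise and, after integration, $\|\Delta d\|_{L^2}^2=\|\nabla d\|_{L^4}^4+\|\tau(d)\|_{L^2}^2$. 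Granting the displayed estimate with constant $C$, set $\delta_0:=1/(1+C)\in(0,1)$; then $\|\nabla d\|_{L^4}^4\le C(\|\Delta d\|_{L^2}^2-\|\nabla d\|_{L^4}^4)$ yields $\|\nabla d\|_{L^4}^4\le(1-\delta_0)\|\Delta d\|_{L^2}^2$, which is the first claimed inequality; the ``consequently'' part then follows at once, since $\|\tau(d)\|_{L^2}^2=\|\Delta d\|_{L^2}^2-\|\nabla d\|_{L^4}^4\ge\delta_0\|\Delta d\|_{L^2}^2\ge\frac{\delta_0}{2}\big(\|\Delta d\|_{L^2}^2+\|\nabla d\|_{L^4}^4\big)$, using $\|\nabla d\|_{L^4}^4\le\|\Delta d\|_{L^2}^2$.

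For the estimate itself I argue by contradiction: if it fails there exist $d_n:\mathbb R^2\to S^2$ with $d_{n,3}\ge\varepsilon_0$, $\|\nabla d_n\|_{L^2}\le C_0$ and $\|\nabla d_n\|_{L^4}^4>n\|\tau(d_n)\|_{L^2}^2$; in particular each $d_n$ is nonconstant. The ratio $\|\nabla d\|_{L^4}^4/\|\tau(d)\|_{L^2}^2$ is invariant under the dilation $d(x)\mapsto d(\lambda x)$ (numerator and denominator both scale like $\lambda^2$), and the constraints $d_3\ge\varepsilon_0$ and $\|\nabla d\|_{L^2}\le C_0$ are scale invariant, so I may rescale to normalize $\|\nabla d_n\|_{L^4}^4=1$. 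Then $\|\tau(d_n)\|_{L^2}^2<1/n\to0$, hence $\|\nabla^2 d_n\|_{L^2}^2=\|\Delta d_n\|_{L^2}^2=1+\|\tau(d_n)\|_{L^2}^2$ is bounded, i.e. $\{\nabla d_n\}$ is bounded in $H^1(\mathbb R^2)$.

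Next I exclude vanishing: if $\sup_{y\in\mathbb R^2}\int_{B_1(y)}|\nabla d_n|^2\to0$, then (Lions' vanishing lemma, or a direct covering argument using $\|\nabla d_n\|_{L^4(B_1(y))}^4\le C\|\nabla d_n\|_{L^2(B_1(y))}^2\|\nabla d_n\|_{H^1(B_1(y))}^2$ summed over a locally finite cover) one gets $\nabla d_n\to0$ in $L^4(\mathbb R^2)$, contradicting $\|\nabla d_n\|_{L^4}^4=1$. So there are $c_0>0$ and $y_n\in\mathbb R^2$ with $\int_{B_1(y_n)}|\nabla d_n|^2\ge c_0$. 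Putting $\hat d_n:=d_n(\cdot+y_n)$, the family $\{\nabla\hat d_n\}$ is still bounded in $H^1(\mathbb R^2)$ and $|\hat d_n|=1$, so along a subsequence $\hat d_n\rightharpoonup d_\infty$ weakly in $H^2_{\mathrm{loc}}$ and $\nabla\hat d_n\to\nabla d_\infty$ strongly in $L^2_{\mathrm{loc}}$ and a.e.; hence $|d_\infty|=1$, $d_{\infty,3}\ge\varepsilon_0$, $\|\nabla d_\infty\|_{L^2(\mathbb R^2)}\le C_0$, $\int_{B_1}|\nabla d_\infty|^2\ge c_0>0$ (so $d_\infty$ is nonconstant), and passing to the limit in $\tau(\hat d_n)\to0$ in $L^2$ gives $\Delta d_\infty+|\nabla d_\infty|^2 d_\infty=0$, i.e. $d_\infty$ is a finite-energy harmonic map $\mathbb R^2\to S^2$.

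To close the argument I invoke the classification of such maps: by the removable-singularity theorem of Sacks--Uhlenbeck, $d_\infty$ extends to a harmonic map $S^2\to S^2$, which by Eells--Wood is $\pm$holomorphic, hence (being nonconstant) surjective; this contradicts $d_{\infty,3}\ge\varepsilon_0>-1$. The resulting contradiction proves the estimate, and with it the lemma. I expect the main obstacle to be precisely this compactness step: one must normalize correctly by scaling, rule out vanishing, re-center by translation, and then verify that the limit map is \emph{simultaneously} nonconstant, of finite Dirichlet energy, subject to $d_3\ge\varepsilon_0$, and exactly harmonic --- the cited harmonic-map classification is then the rigidity input that makes these four properties incompatible. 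Everything else (the pointwise identity, the scaling bookkeeping, and the elementary interpolation on balls) is routine.
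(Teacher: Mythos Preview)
The paper does not prove this lemma; it is quoted from \cite{LEI} (Lei--Li--Zhang) and used as a black box. Your proposal is correct and is essentially the argument of that reference: the pointwise orthogonality identity $\|\Delta d\|_{L^2}^2=\|\nabla d\|_{L^4}^4+\|\tau(d)\|_{L^2}^2$ reduces both conclusions to the single estimate $\|\nabla d\|_{L^4}^4\le C(\varepsilon_0,C_0)\|\tau(d)\|_{L^2}^2$, and the scale--and--translate concentration--compactness contradiction produces a nonconstant finite-energy harmonic map from $\mathbb R^2$ into the spherical cap $\{x_3\ge\varepsilon_0\}\subset S^2$, which the Sacks--Uhlenbeck removable-singularity theorem together with the classification of harmonic maps $S^2\to S^2$ excludes.
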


Now, we can give the proof of Theorem \ref{thm1.1} as follows.

\textbf{Proof of Theorem \ref{thm1.1}.} By Proposition \ref{prop3.1}, there is a unique strong solution $(\rho, u, d, p)$ in $Q_T=\mathbb R^2\times(0, T)$. To show the global existence, we only need to extend such local solution to be a global one. For this purpose, it suffices to show that the quantity
$$
Q(t)=\sup_{0\leq s\leq t}(\|u\|_{H^2}^2+\|\nabla d\|_{H^2}^2+\|\nabla\rho\|_2^2)
$$
is finite for any $t$.

We extend the local strong solution to the maximal time $T_*$. Using the same argument in Lemma \ref{lem1.2}, we can prove that
\begin{eqnarray}
&&\sup_{0\leq s\leq t}(\|u\|_2^2+\|\nabla d\|_2^2)+\int_0^t(\|\nabla u\|_2^2+\|\Delta d+|\nabla d|^2d\|_2^2)ds\leq C,\label{3.1}\\
&&E_1(t)\leq C(t+1)+C\int_0^t(\|\nabla u\|_2^2+\|\nabla d\|_4^4)(E_1(s)+1)ds,\label{3.2}\\
&&E_2(t)\leq C+C\int_0^t(1+E_1^5(s))(1+E_2(s))ds,\label{3.3}\\
&&Q(t)\leq C(\underline\rho, \overline\rho, \|u_0\|_{H^2}, \|\nabla d_0\|_{H^2}, \|\nabla\rho_0\|_2, E_1(t), E_2(t))\label{3.4}
\end{eqnarray}
for any $0\leq t<T_*$, where $C$ is a positive constant depending only on $\underline\rho$, $\overline\rho$, $\|u_0\|_{H^2}$, $\|\nabla d\|_{H^2}$ and $\|\nabla\rho_0\|_2$, $E_1(t)$ and $E_2(t)$ are given by
\begin{eqnarray*}
&&E_1(t)=\sup_{0\leq s\leq t}(\|\nabla u\|_2^2+\|d_t\|_2^2+\|\nabla^2d\|_2^2)+\int_0^t(\|u_t\|_2^2+\|\nabla^2u\|_2^2+\|\nabla d_t\|_2^2+\|\nabla^3d\|_2^2)ds,\\
&&E_2(t)=\sup_{0\leq s\leq t}(\|u_t\|_2^2+\|\nabla d_t\|_2^2)+\int_0^t(\|\nabla u_t\|_2^2+\|d_{tt}\|_2^2+\|\nabla^2d_t\|_2^2)ds.
\end{eqnarray*}

Maximum principle to parabolic equation implies that $d_3\geq\varepsilon_0$, since $d_{0,3}\geq\varepsilon_0$. Thus, we can use Lemma \ref{lem3.2} to deduce
$$
\int_0^t(\|\Delta d\|_2^2+\|\nabla d\|_4^4)ds\leq C\int_0^t\|\Delta d+|\nabla d|^2d\|_2^2ds\leq C.
$$
On account of this inequality, by Gronwall inequality, it follows from (\ref{3.1}) and (\ref{3.2}) that
$$
E_1(t)\leq C(t+1),\qquad t\in [0, T_*).
$$
Combining this with (\ref{3.3}), by Gronwall inequality, we obtain
$$
E_2(t)\leq C(1+t^6)e^{C(1+t^6)},\qquad t\in[0, T_*).
$$
The above two estimates immediately imply, recalling (\ref{3.4}), that $Q(t)$ keeps finite for any finite time. Consequently, Proposition \ref{prop3.1} implies that $T_*=\infty$. The proof is complete.

\section{Global weak solutions}\label{sec4}

In this section, we concern on the proof of global existence of weak solutions to the Cauchy problem of the system (\ref{1.1})--(\ref{1.5}), based on the global existence of strong solutions obtained in the previous section, by using the compactness argument.

As a preparation, we need the following compactness lemma due to Lions \cite{Lions}.

\begin{lemma}\label{lem4.1}
Assume that
\begin{eqnarray*}
&&0\leq\rho^n\leq C\mbox{ a.e. on }\mathbb R^N\times(0, T),\quad\textmd{div}u^n=0\mbox{ a.e. on }\mathbb R^N\times(0, T),\\
&&\partial_t\rho^n+\textmd{div}(\rho^nu^n)=0\mbox{ in }\mathcal D'(\mathbb R^N\times(0, T)),\\
&&\rho_0^n\rightarrow\rho_0\mbox{ in }L^1(B_R(0)),\quad u^n\rightarrow u\mbox{ weakly in }L^2(0, T; H^1(B_R(0)))\mbox{ for all }R\in(0, \infty),\\
&&\frac{u^n}{1+|x|}1_{(\rho^n\geq\delta)}=F_1^n+F_2^n\mbox{ for all }\delta>0, \\
&&F_1^n \mbox{ and }F_2^n \mbox{ are bounded in }L^1(0, T; L^1(\mathbb R^N)) \mbox{ and }L^1(0, T; L^\infty(\mathbb R^N)), \mbox{ respectively. }
\end{eqnarray*}
Then, the following hold

(1) $\rho^n$ converges in $C([0, T]; L^q(B_R(0)))$ for all $q\in[1,\infty)$ and $R\in(0, \infty)$ to the unique solution $\rho$ to
\begin{equation*}
\left\{
\begin{array}{l}
\partial_t\rho+\textmd{div}(\rho u)=0\mbox{ in }\mathcal D'(\mathbb R^N\times(0, T))\\
\rho|_{t=0}=\rho_0
\end{array}
\right.
\end{equation*}
such that
$$
\frac{u}{1+|x|}1_{(\rho\geq\delta)}\in L^1(0, T; L^1(\mathbb R^N))+L^1(0, T; L^\infty(\mathbb R^N)).
$$

(2) We assume in addition that $\rho^n|u^n|^2$ is bounded in $L^\infty(0, T; L^1(\mathbb R^N))$, $u^n$ is bounded in $L^2(0, T; H^2(\mathbb R^N))$, and for some $q\in(1,\infty)$ and $m\geq1$
$$
\left|\left\langle\frac{\partial}{\partial t}(\rho^nu^n),\varphi\right\rangle\right|\leq C\|\varphi\|_{L^q(0, T; W^{m, q}(\mathbb R^N))}
$$
for all $\varphi\in L^q(0, T; W^{m, q}(\mathbb R^N))$ with $\textmd{div}\varphi=0$. Then
$$
\sqrt{\rho^n}u^n\rightarrow\sqrt\rho u\mbox{ in }L^p(0, T; L^r(B_R(0)))
$$
for $2<p<\infty$, $1\leq r<\frac{2Np}{Np-4}$ and $0<R<\infty$.
\end{lemma}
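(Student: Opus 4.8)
The plan is to establish the two parts by combining the DiPerna--Lions theory of renormalized solutions to the transport equation with the uniform bounds in the hypotheses; part (1) is essentially a stability result for the continuity equation, and part (2) upgrades it to strong convergence of the momentum term.

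For part (1), I would first extract a weak-$*$ limit $\rho$ of $\rho^n$ in $L^\infty(\mathbb R^N\times(0,T))$. Since $0\le\rho^n\le C$ and $u^n$ is bounded in $L^2(0,T;H^1(B_R))$, the flux $\rho^nu^n$ is bounded in $L^2(0,T;L^s(B_R))$ for some $s>1$, so the equation bounds $\partial_t\rho^n$ in $L^2(0,T;W^{-1,s}(B_R))$; by the Aubin--Lions--Simon lemma, $\rho^n\to\rho$ strongly in $C([0,T];W^{-1,s}(B_R))$. Then the nonlinear flux passes to the limit, $\rho^nu^n\rightharpoonup\rho u$ in $\mathcal D'$ (strong in $W^{-1,s}$ against weak in $H^1$), so $\rho$ solves $\partial_t\rho+\mathrm{div}(\rho u)=0$ with datum $\rho_0$. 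Because $\mathrm{div}\,u^n=0$ and $u^n\in L^2(0,T;H^1_{\mathrm{loc}})$, for every $\beta\in C_b^1$ one also has $\partial_t\beta(\rho^n)+\mathrm{div}(\beta(\rho^n)u^n)=0$; repeating the argument, $\beta(\rho^n)\to\overline{\beta(\rho)}$ in $C([0,T];W^{-1,s}(B_R))$ (with $\overline{\beta(\rho)}$ the weak-$*$ limit of $\beta(\rho^n)$) and $\partial_t\overline{\beta(\rho)}+\mathrm{div}(\overline{\beta(\rho)}u)=0$ with datum $\beta(\rho_0)$. On the other hand, since $u\in L^2(0,T;H^1)$ with $\mathrm{div}\,u=0$, $\rho$ is a renormalized solution, so $\beta(\rho)$ solves the same Cauchy problem; the hypothesis that $u^n(1+|x|)^{-1}\mathbf{1}_{(\rho^n\ge\delta)}$, and its limiting analogue, splits into an $L^1(L^1)$ and an $L^1(L^\infty)$ part is exactly what is needed to run the DiPerna--Lions uniqueness argument on all of $\mathbb R^N$, controlling transport of mass to spatial infinity on the region where $\rho$ is nondegenerate. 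Uniqueness gives $\overline{\beta(\rho)}=\beta(\rho)$; taking $\beta(s)=s^2$ shows the weak-$*$ limit of $(\rho^n)^2$ is $\rho^2$, hence $\rho^n\to\rho$ strongly in $L^2(0,T;L^2(B_R))$, and interpolation with the uniform $L^\infty$ bound gives convergence in $L^p(0,T;L^q(B_R))$ for all finite $p,q$. The $C([0,T];L^q(B_R))$ statement then follows by combining the strong $C([0,T];W^{-1,s})$ convergence with the equicontinuity in time of $\int_{B_R}(\rho^n)^q$ read off from the renormalized equation.

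For part (2), the idea is to identify the weak limit of $\sqrt{\rho^n}u^n$ and then show the convergence is strong. Since $0\le\rho^n\le C$, $\sqrt{\rho^n}u^n$ is bounded in $L^\infty(0,T;L^2)$, and since $u^n$ is bounded in $L^2(0,T;H^2)\hookrightarrow L^2(0,T;L^{r_0})$, it is bounded in $L^p(0,T;L^r)$ for the stated range of $(p,r)$ by interpolation; a subsequence converges weakly there to some $V$. Using the strong convergence $\rho^n\to\rho$, hence $\sqrt{\rho^n}\to\sqrt\rho$ strongly in $C([0,T];L^q_{\mathrm{loc}})$, one multiplies: $\rho^nu^n=\sqrt{\rho^n}\cdot(\sqrt{\rho^n}u^n)\rightharpoonup\sqrt\rho\,V$, while also $\rho^nu^n\rightharpoonup\rho u$ (strong times weak), so $V=\sqrt\rho\,u$ on $\{\rho>0\}$, and the $L^\infty(0,T;L^1)$ bound on $\rho^n|u^n|^2$ forces $V=0$ on $\{\rho=0\}$, so $V=\sqrt\rho\,u$ a.e. To obtain strong convergence I would use the bound on $\partial_t(\rho^nu^n)$: it shows $\rho^nu^n$ is equicontinuous in time with values in $W^{-m,q'}_{\mathrm{loc}}$, hence $\rho^nu^n$ converges in $C([0,T];w\text{-}L^2_{\mathrm{loc}})$; combining this weak-in-time continuity of the momentum with the strong convergence of $\rho^n$ allows one to pass to the limit in $\iint\rho^n|u^n|^2=\iint(\rho^nu^n)\cdot u^n$ and match the limit with $\iint|V|^2$, so the $L^2$-norms converge and, together with the identified weak limit, $\sqrt{\rho^n}u^n\to\sqrt\rho\,u$ strongly in $L^2(0,T;L^2_{\mathrm{loc}})$; a further interpolation with the $L^\infty L^2$ and $L^2 H^2$ bounds gives the full range $2<p<\infty$, $1\le r<\frac{2Np}{Np-4}$.

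The main obstacle is the vacuum: there is no a priori control on $\nabla(\rho^nu^n)$, so one cannot apply the Aubin--Lions lemma directly to the momentum, and the strong convergence of $\sqrt{\rho^n}u^n$ must be extracted indirectly, from the weak-in-time continuity of $\rho^nu^n$, the convergence of the kinetic energy, and the already established strong convergence of $\rho^n$. In part (1) the delicate point is the corresponding uniqueness statement on the unbounded domain $\mathbb R^N$, which is precisely why the decomposition hypothesis on $u^n(1+|x|)^{-1}\mathbf{1}_{(\rho^n\ge\delta)}$ is imposed.
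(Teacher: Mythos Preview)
The paper does not prove this lemma at all: it is stated as ``the following compactness lemma due to Lions \cite{Lions}'' and simply quoted from Lions' monograph without argument. So there is no proof in the paper to compare your attempt against.

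That said, your sketch is essentially the argument Lions gives. For part~(1) the combination of DiPerna--Lions renormalization, uniqueness for the transport equation on $\mathbb R^N$ (which is where the decomposition hypothesis on $u^n(1+|x|)^{-1}\mathbf 1_{(\rho^n\ge\delta)}$ enters), and the $\beta(s)=s^2$ trick to upgrade weak to strong convergence is exactly the intended route. For part~(2) your outline is also correct: the duality bound on $\partial_t(\rho^nu^n)$ yields compactness of $\rho^nu^n$ in $C([0,T];H^{-1}_{\mathrm{loc}})$ (or a similar negative space) via Aubin--Lions, and pairing this strong-in-time limit against the weak $L^2(0,T;H^1)$ limit of $u^n$ lets one pass to the limit in $\iint\rho^n|u^n|^2$; together with the identification of the weak limit of $\sqrt{\rho^n}u^n$ this gives norm convergence and hence strong convergence. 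One small point: the stated range $1\le r<\tfrac{2Np}{Np-4}$ comes from interpolating $\sqrt{\rho^n}u^n\in L^\infty(0,T;L^2)$ with $\sqrt{\rho^n}u^n\in L^2(0,T;L^{2N/(N-2)})$, which uses only $u^n\in L^2(0,T;H^1)$; the hypothesis $u^n\in L^2(0,T;H^2)$ written in the paper appears to be a typo for $H^1$, and your final interpolation should be phrased accordingly.
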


Now, we can give the proof of Theorem \ref{thm1.2} as follows.

\textbf{Proof of Theorem \ref{thm1.2}.} Take a sequence $(\rho_0^{j}, u_0^{j}, d_0^{j})$, such that
\begin{eqnarray*}
&&\frac{1}{j}\leq\rho_0^j\leq\overline\rho+\frac{1}{j}, \quad \nabla\rho_0^j\in L^2(\mathbb R^2),\quad \rho_0^j-\tilde\rho\rightarrow\rho_0-\tilde\rho\mbox{ in }L^q(\mathbb R^2),\\
&&u_0^j\in H^2(\mathbb R^2), \quad\textmd{div}u_0=0, \quad u_0^j\rightarrow u_0\mbox{ in }L^2(\mathbb R^2)\\
&&\nabla d_0^j\in H^2(\mathbb R^2),\quad|d_0^j|=1, \quad d_{0, 3}^j\geq\frac{\varepsilon_0}{2},\quad\nabla d_0^j\rightarrow\nabla d_0\mbox{ in }L^2(\mathbb R^2).
\end{eqnarray*}
By Theorem \ref{thm1.1}, for each $j$ there is a unique global strong solution $(\rho^j, u^j, d^j)$. It follows from (\ref{1.1}) and our assumption that
\begin{equation}\label{4.1}
\|\rho^j(t)-\tilde\rho\|_q=\|\rho_0^j-\tilde\rho\|_q\rightarrow\|\rho_0-\tilde\rho\|_q.
\end{equation}
Multiplying (\ref{1.2}) by $u^j$, (\ref{1.4}) by $-\Delta d^j$, integrating over $\mathbb R^2$, using (\ref{1.1}) and $|d^j|=1$, summing the resulting identities up yields
\begin{align*}
&\frac{d}{dt}\int_{\mathbb R^2}\left(\frac{\rho^j}{2}|u^j|^2+\frac{|\nabla d^j|^2}{2}\right)dx+\int_{\mathbb R^2}(|\nabla u^j|^2+|\Delta d^j+|\nabla d^j|^2d^j|)dx
=\int_{\mathbb R^2}\left(\frac{\rho_0^j}{2}|u_0^j|^2+\frac{|\nabla d_0^j|^2}{2}\right)dx.
\end{align*}
By Lemma \ref{lem3.1}, noticing that $d_3^j\geq\frac{\varepsilon_0}{2}$, we deduce from the above identity that
\begin{align}
&\sup_{0\leq t\leq T}(\|\sqrt{\rho^j}u^j\|_2^2+\|\nabla d^j\|_2^2)+\int_0^T(\|\nabla u^j\|_2^2+\|\nabla^2 d^j\|_2^2)dt\leq C(\|\sqrt{\rho_0}u_0\|_2^2+\|\nabla d_0\|_2^2). \label{4.2}
\end{align}
By (\ref{4.1}) and (\ref{4.2}), it follows from Gagliado-Nirenberg inequality that
\begin{align*}
&\tilde\rho\int_{\mathbb R^2}|u^j|^2dx\leq\int_{\mathbb R^2}(|\rho^j-\tilde\rho||u^j|^2+\rho^j|u^j|^2)dx\\
\leq&C(\|\sqrt{\rho_0}u_0\|_2^2+\|\nabla d_0\|_2^2)+\int_{\mathbb R^2}|\rho^j-\tilde\rho||u^j|^2dx\\
\leq&C(\|\sqrt{\rho_0}u_0\|_2^2+\|\nabla d_0\|_2^2)+\left(\int_{\mathbb R^2}|\rho^j-\tilde\rho|^qdx\right)^{1/q}\left(\int_{\mathbb R^2}|u^j|^{2q/(q-1)}dx\right)^{(q-1)/q}\\
\leq&C(\|\sqrt{\rho_0}u_0\|_2^2+\|\nabla d_0\|_2^2)+C\|\rho_0-\tilde\rho\|_q\left(\int_{\mathbb R^2}|u^j|^{2}dx\right)^{(q-1)/q}\left(\int_{\mathbb R^2}|\nabla u^j|^{2}dx\right)^{1/q}\\
\leq&\frac{\tilde\rho}{2}\int_{\mathbb R^2}|u^j|^{2}dx+C(\|\sqrt{\rho_0}u_0\|_2^2+\|\nabla d_0\|_2^2)+C\|\rho_0-\tilde\rho\|_q^q\int_{\mathbb R^2}|\nabla u^j|^{2}dx,
\end{align*}
and thus
$$
\int_{\mathbb R^2}|u^j|^2dx\leq C(\|\sqrt{\rho_0}u_0\|_2^2+\|\nabla d_0\|_2^2)+C\|\rho_0-\tilde\rho\|_q^q\int_{\mathbb R^2}|\nabla u^j|^{2}dx,
$$
which, by (\ref{4.2}), there holds
\begin{equation}\label{4.3}
\int_0^T\int_{\mathbb R^2}|u^j|^2dx\leq C(\|\sqrt{\rho_0}u_0\|_2^2+\|\nabla d_0\|_2^2+\|\rho_0-\tilde\rho\|_q^q)(T+1).
\end{equation}

By Ladyzhenskaya inequality and Gagliado-Nirenberg inequalities, it follows from (\ref{4.2}) and (\ref{4.3}) that
\begin{eqnarray}
&&\|\nabla d^j\|_{L^4(Q_T)}^4\leq C(\|\sqrt{\rho_0}u_0\|_2^2+\|\nabla d_0\|_2^2)^2,\label{4.4}\\
&&\|u^j\|_{L^2(0, T; L^q(\mathbb R^2))}^2\leq C\|u^j\|_{L^2(0, T; H^1(\mathbb R^2))}^2\nonumber\\
&&\leq C(T+1)(\|\sqrt{\rho_0}u_0\|_2^2+\|\nabla d_0\|_2^2+\|\rho_0-\tilde\rho\|_q^q),\quad q\in[2,\infty).\label{4.5}
\end{eqnarray}
On account of these two inequalities, there holds
$$
\|u^j\cdot\nabla d\|_{L^{4/3}(0,T; L^r(\mathbb R^2))}\leq C(T+1)(\|\sqrt{\rho_0}u_0\|_2^2+\|\nabla d_0\|_2^2+\|\rho_0-\tilde\rho\|_q^q),\quad r\in[4/3, 4),
$$
and thus, it follows from equation (\ref{1.4}) that
$$
\|\partial_t d^j\|_{L^{4/3}(0,T; L^2(\mathbb R^2))}\leq C(T+1)(\|\sqrt{\rho_0}u_0\|_2^2+\|\nabla d_0\|_2^2+\|\rho_0-\tilde\rho\|_q^q+1).
$$
By Lemma \ref{lem3.1}, there is a subsequence, still indexed by $i$, such that
$$
d^i\rightarrow d\qquad\mbox{ in }L^2(0, T; H^1(B_R(0)))\mbox{ for all }R\in(0, \infty).
$$
By (\ref{4.2}) and (\ref{4.5}), there holds
\begin{equation}\label{4.6}
\|\rho^ju^j\otimes u^j\|_{L^2(0, T; L^r(\mathbb R^2))}\leq C\|\sqrt{\rho^j}u^j\|_{L^\infty(0, T; L^2(\mathbb R^2))}\|u^j\|_{L^2(0,T; L^{2r/(2-r)}(\mathbb R^2))}\leq C,\quad\mbox{ for }r\in[1,2).
\end{equation}
By the aid of (\ref{4.4}) and (\ref{4.6}), it follows from (\ref{1.2}) that
\begin{align*}
\left|\left\langle\frac{\partial}{\partial t}(\rho^ju^j),\varphi\right\rangle\right|=&\left|\int_0^T\int_{\mathbb R^2}(\nabla d^j\otimes \nabla d^j+\rho^ju^j\otimes u^j-\nabla u^j):\nabla\varphi dxdt\right|\\
\leq&C(\|\nabla d^j\|_{L^4(Q_T)}^2\|\nabla\varphi\|_{L^2(Q_T)}+\|\rho^ju^j\otimes u^j\|_{L^2(0, T; L^{q/(q-1)(\mathbb R^2)})}\|\nabla\varphi\|_{L^2(0,T; L^q(\mathbb R^2))}\\
&+\|\nabla u^j\|_{L^2(Q_T)}\|\nabla\varphi\|_{L^2(Q_T)})\\
\leq&C(\|\nabla\varphi\|_{L^2(Q_T)}+\|\nabla\varphi\|_{L^2(0,T; L^q(\mathbb R^2))})\leq C\|\varphi\|_{L^2(0,T;H^2(\mathbb R^2))}
\end{align*}
for any $q\in(2,\infty)$ and $\varphi\in L^2(0, T; H^2)$ with $\textmd{div}\varphi=0$.
Now, we can apply Lemma \ref{lem4.1} to deduce that $\sqrt{\rho^j}u^j\rightarrow\sqrt\rho u$ in $L^p(0, T; L^r(B_R(0))$ with $2<p<\infty$ and $1\leq r\leq\frac{4p}{2p-4}$ and $R\in(0, \infty)$. Hence, we can take the limit $i\rightarrow\infty$ to conclude that $(\rho, u, d)$ is a weak solution to the system (\ref{1.1})--(\ref{1.5}) with the initial data $(\rho_0, u_0, d_0)$. The proof is complete.

\end{document}